\numberwithin{equation}{section}
  \newtheorem{theorem}{Theorem}[section]
  \newtheorem{corollary}[theorem]{Corollary}
  \newtheorem{remark}[theorem]{Remark}
\title[$2$-Ruled hypersurfaces in a Walker $4$-manifold]{$2$-Ruled hypersurfaces in a Walker $4$-manifold}
\author[M. A. Dram\'e, A. Ndiaye, A. S. Diallo]{Mohamed Ayatola Dram\'e*, Ameth Ndiaye**, Abdoul Salam Diallo***}
\newcommand{\acr}{\newline\indent}
\address{\llap{*\,} Universit\'e Alioune Diop de Bambey\acr
UFR SATIC, D\'epartement de Math\'ematiques\acr
\'Equipe de Recherche en Analyse Non Lin\'eaire et G\'eom\'etrie (ER ANLG)\acr
B. P. 30, Bambey, S\'en\'egal\acr} 
\email{rabanyayatoulah@gmail.com}
\address{\llap{**\,} Universit\'e Cheik Anta Diop de Dakar\acr
FASTEF, D\'epartement de Math\'ematiques\acr
B. P. 880, Dakar, S\'en\'egal \acr}
\email{ameth1.ndiaye@ucad.edu.sn}
\address{\llap{***\,}  Universit\'e Alioune Diop de Bambey\acr
UFR SATIC, D\'epartement de Math\'ematiques\acr
\'Equipe de Recherche en Analyse Non Lin\'eaire et G\'eom\'etrie (ER ANLG)\acr
B. P. 30, Bambey, S\'en\'egal\acr} 
\email{abdoulsalam.diallo@uadb.edu.sn}
\thanks{}
\subjclass[2010]{14J70, 53A07, 53A10}
\keywords{$2$-ruled hypersurface; Walker manifolds.}
\begin{document}
 \tiny{
\begin{abstract}  
The hypersurface is one of the most important objects in a space. Many 
authors studied diffrent geometric aspects of hypersurfaces in a space.
In this paper, we define three types of $2$-ruled hypersurfaces in 
a Walker $4$-manfold $\mathbb{E}^4_1$. We obtain the Gaussian and mean curvatures of the $2$-ruled hypersurfaces of type-$1$,  type-$2$ and 
type $3$. We give some  characterizations about its minimality. We also deal with the first Laplace-Beltrami operators of these types of $2$-ruled hypersurfaces in the considered Walker $4$-manifold.
\end{abstract}

\maketitle

\section{Introduction}\label{Introduction} 

\noindent
The study of hypersurface of a given ambiant space is a naturel interesting problem which enriches our knowledge and understanding of the geometry 
of the space itself. The theory of ruled surfaces in $\mathbb{R}^3$ is a 
classical subject in diffrential geometry. The study of ruled surfaces of 
a given ambiant space is a naturel and interesting problem. 
A surface $\Sigma$ in $M$ is said to be ruled if every point of $\Sigma$ is on 
(a open geodesic segment) in $M$ that lies in $\Sigma$ (see \cite{Nomizu94}). Locally a ruled surface is made by a one parameter family of geodesic
segments \cite{Carmo76}.  Ruled surfaces are one-parameter set of lines and they are one of the important topics of classifical differential geometry. 
A ruled surface is defined as
\begin{eqnarray*}
\varphi(s,t)=\alpha(s) + tX(s), s,t\in I\subset\mathbb{R},
\end{eqnarray*}
where the curve $\alpha(s)$ is called base curve and $X(s)$ is called the ruling 
of the ruled surface. A lots of studies have been done about different characterizations of ruled surfaces in $3$-dimensional Euclidean, Minkowskian, Galilean and pseudo-Galilean space (see \cite{Dillen, Divjak, Flory, Guler, Kim2004, Niang03} and references therein). In \cite{Aslan21-1}, the authors define a quaternionic operator whose scalar part is a real parameter and vector part is a curve in three dimensional real vector space $\mathbb{R}^3$. They prove that quaternion product of this operator and a spherical curve represents a ruled surface in $\mathbb{R}^3$ if the vector part of the quaternionic operator is perpendicular to the position vector of the spherical curve. Also in \cite{Aslan21-2}, the authors show that the split quaternion product of a split quaternion operator and a curve, which lies on Lorentzian unit sphere or on hyperbolic unit sphere, parametrizes a ruled surface in the $3$-dimensional Minkowski space $\mathbb{E}^{3}_{1}$ if the vector part of the operator is perpendicular to the position vector of the spherical curve. Recently, in \cite{Niang21}, the authors construct two special families of ruled surfaces in a three dimensional strict Walker manifold. They show that the local degeneracy (resp. non-degeneracy) to one of this family has a strong consequence on the geometry of the ambiant Walker manifold. Ruled hypersurfaces in higher dimensions have also been studied by many authors \cite{Barbosa84-1,Barbosa84-2}. In \cite{Boeckx96},  the intrinsic classification of irreducible ruled hypersurfaces of $\mathbb{R}^4$ has been given. In \cite{Kimura2020}, a new approach to investigating ruled real hypersurfaces in complex hyperbolic space $\mathbb{CH}^n$ is given.
In the paper \cite{Kimura2021}, the authors study ruled real hypersurfaces in the complex quadric.\\

\noindent
A $2$-ruled hypersurface in $\mathbb{R}^4$ is a one-parameter family of planes in $\mathbb{R}^4$. This is a generalization of ruled surfaces in 
$\mathbb{R}^3$. In \cite{Saji2002}, the author study singularities of 
$2$-ruled hypersurfaces in Euclidean $4$-space. After defining a 
non-degenerate $2$-ruled hypersurface, he gives a necessary and sufficient condition for such a map germ to be right-left equivalent to the cross cap 
$\times$ interval.  Also, the author in \cite{Saji2002} discusses the behavior 
of a generic $2$-ruled hypersurface map. In \cite{Altin2021} the authors 
obtain the Gauss map (unit normal vector field) of a $2$-ruled hypersurface 
in Euclidean $4$-space with the aid of its general parametric equation. They also obtain Gaussian and mean curvatures of the $2$-ruled hypersurface and they give some characterizations about its minimality. Finally, they deal with the first and second Laplace-Beltrami operators of $2$-ruled hypersurfaces in 
$\mathbb{E}^4$. Recently, in \cite{Ndiaye2022} the authors define three types 
of $2$-ruled hypersurfaces in the Minkowski $4$-space $E^{4}_{1}$. They 
obtain Gaussian and mean curvatures of the $2$-ruled hypersurfaces 
of type-$1$ and type-$2$, and some characterizations about its minimality. They also deal with the first Laplace-Beltrami operators of these types 
of $2$-ruled hypersurfaces in $E^{4}_{1}$.\\

\noindent
Motivated by the above two works, we study in this paper the $2$-ruled hypersurfaces in a Walker $4$-manifold. We define three types of $2$-ruled hypersurfaces  and we gives Gaussian and mean curvatures of the $2$-ruled hypersurface and some characterizations about its minimality. Our paper is organized as follows. Section \ref{Introduction} introduces the topic. 
In section \ref{Preliminaire},  we recall some basics notions on pseudo-Riemannian manifolds. In section \ref{Ruled}, we study $2$-ruled
ypersurface on a Walker $4$-manifold.

\section{Preliminaries}\label{Preliminaire}
\noindent
In this section, we recall some basics notions on pseudo-Riemannian manifolds taken from the book \cite{Falcitelli2004}. We begin with some algebraic preliminaries on non-degenerate bilinear forms on an $m$-dimensional real vector space $V$.\\

\noindent
Let $g : V \times V \to \mathbb{R}$ be a symmetric bilinear form. We say 
that $g$ is non-degenerate if $g(u, v) = 0$ for each $v \in V$ implies $u = 0$, otherwise $g$ is called degenerate. A non-degenerate symmetric bilinear 
form on $V$ is called a pseudo-Euclidean metric on $V$. It may induce either 
a non-degenerate or a degenerate symmetric bilinear form on a subspace 
$W$ of $V$; then $W$ is said to be a non-degenerate or a degenerate subspace, respectively. We say that $g$ is positive (negative) definite 
provided that $u \neq 0$ implies $g(u, u) > 0 (< 0)$. If $g$ is non-degenerate, there exists an ordered basis $(e_1, e_2,\ldots, e_m)$ of $V$ such that:
\begin{eqnarray*}
g(e_i,e_i) &=& -1, \quad 1 \leq i \leq q,\\
g(e_i,e_i) &=& 1, \quad q+1 \leq i \leq m,\\
g(e_i,e_j) &=& 0, \quad i\neq j,
\end{eqnarray*}
where $q$ is uniquely determined and $(q,m-q)$ is the signature of $g$. 
Obviously, in the case $q = 0$ or $s = m$, the first or the second condition 
has to be dropped. The integer $q$ is called the index of $g$ on $V$ and 
it is the largest dimension of a subspace $W \subset V$ on which the induced metric is negative definite.\\

\noindent
A pseudo-Riemannian metric $g$ on an $m$-dimensional manifold $M$ 
is a symmetric tensor field of type $(0,2)$ on $M$ such that for any 
$p \in M$ the tensor $g_p$ is a non-degenerate symmetric bilinear form 
on the tangent space  $T_p M$ of constant index. We call $(M,g)$ 
a pseudo-Riemannian manifold. Frequently, we denote by $M^{m}_{q}$ 
an $m$-dimensional pseudo-Riemannian manifold of index $q$. In the
particular case $m > 2$ and $q = 1$, we call $(M,g)$ a Lorentzian manifold.
Obviously, if $q = 0, (M, g)$ is a Riemannian manifold.\\

\noindent
Let $N^{n}_{s}$  be a submanifold of a pseudo-Riemannian manifold 
$M^{m}_{q}$. If the pseudo-Riemannian metric tensor $g_M$ of $M^{m}_{q}$
induces a pseudo-Riemannian metric tensor, a Riemannian metric tensor 
or a degenerate metric tensor $g_N$ on $N^{n}_{s}$ , then $N^{n}_{s}$ is called a pseudo-Riemannian  submanifold, a Riemannian submanifold or a degenerate submanifold, respectively, of $M^{m}_{q}$.
Let $M^{m}_{q}$ be an $m$-dimensional pseudo-Riemannian manifold with pseudo- Riemannian metric tensor $g_M$ of index $q$. Denoting by 
$\langle , \rangle$ the associated nondegenerate inner product on 
$M^{m}_{q}$, a tangent vector $X$ to $M^{m}_{q}$ is said to be spacelike
if $\langle X,X\rangle> 0$ ( or $X = 0$), timelike if $\langle X, X \rangle < 0$
or lightlike (null) if $\langle X, X\rangle = 0$ and $X \neq 0$. The set of null vectors of $T_p M$ is called the null cone at $p \in M$.\\

\noindent
Let $M^{m}_{1}(c)$ be an $m$-dimensional Lorentzian space form of constant curvature $c$, that is, $M^{m}_{1}(c)$ is the de Sitter space-time
$\mathbb{S}^{m}_{1}(c)$,  Minkowski space-time $\mathbb{R}^{4}_{1}(c)$ 
or the anti-de Sitter space-time $\mathbb{H}^{m}_{1}(c)$
according to $c > 0, c = 0$ or $c < 0$. For simplicity, we suppose that the constant curvature $c$ of  $M^{m}_{1}(c)$ is equal to $1, 0, -1$ according 
to whether $c > 0, c = 0, c < 0$.\\

\noindent
Now, we describe some basic examples of pseudo-Riemannian manifolds.
Let $\mathbb{R}^{m}_{q}$ be an $m$-dimensional pseudo-Euclidean space with metric tensor given by
\begin{eqnarray*}
g = -\sum_{i=1}^{q} (du_i)^2 + \sum_{i=q+1}^{m}(du_i)^2,
\end{eqnarray*}
where $(u_ 1,\ldots,u_m)$ is a coordinate system of $\mathbb{R}^{m}_{q}$.
So $(\mathbb{R}^{m}_{q}, g)$ is a ﬂat pseudo-Riemannian manifold of index 
$q$. Putting:
\begin{eqnarray*}
\mathbb{S}^{m}_{1}(1) =\{u\in \mathbb{R}^{m+1}_{1}, \langle u,u\rangle =1\},
\end{eqnarray*}
one obtains an $m$-dimensional pseudo-Riemannian manifold of index $q$ and of constant curvature $c = 1$. In the theory of general relativity,
 $\mathbb{S}^{4}_{1}(c)$ is called the de Sitter space-time. Putting:
 \begin{eqnarray*}
\mathbb{H}^{m}_{1}(-1) =\{u\in \mathbb{R}^{m+1}_{2}, \langle u,u\rangle =-1\},
\end{eqnarray*}
one obtains an $m$-dimensional pseudo-Riemannian manifold of index 
$q$ and of constant curvature $c=-1$. $\mathbb{H}^{m}_{1}(-1)$
is called  the anti-de Sitter space. 
We end this section by the following remark.

\begin{remark}
In contrast to the Riemannian case, there are topological obstructions to the existence of a Lorentz metric on a manifold $M$. Such a metric exists if 
either $M$ is non-compact, or $M$ is compact and has Euler number 
$\chi (M) = 0$.
\end{remark}

 \section{$2$-ruled hypersurfaces on a Walker $4$-manifold}\label{Ruled}
 
 \noindent
Hypersurfaces are one of the important objects in a space. Hypersurfaces in a manifold of constant curvature have been studied by many authors. Many
ambiant spaces are not always of constant curvature. In this paper we have studied $2$-ruled hypersurfaces in a Walker $4$-manifold.\\
 
 \noindent
A Walker $4$-manifold noted $M$, is a pseudo-Riemannian manifold, which admits a field of parallel null $2$-planes with signature $(++--)$. This class 
of manifold is locally isometric to $(U,g_f)$ where $U$ is an open of 
$\mathbb{R}^4$ and $g_f$ is the metric given, respectively to the local coordinates basis by 
$\{\partial_i=\frac{\partial}{\partial_{u_1}}\}_{i=1,2,3,4}$ by
\begin{eqnarray*}
g_f(\partial_1,\partial_3) &=& g_f(\partial_2,\partial_4)=1, \\
g_f(\partial_i,\partial_j) &=& g_{f_{ij}}(u_1,u_2,u_3,u_4) \quad \mbox{for} 
\quad i,j=3,4.
\end{eqnarray*}
The pseudo-Riemannian geometry of Walker metrics satisfying $g_{f_{34}}=0$ has been studied  by Chaichi et al. \cite{Chaichi05}. The purpose of this paper 
is to characterize some metrics  propertiers of Walker satisfying :
$g_{f_{33}}=g_{f_{44}}=0$. More precesily, we will consider Walker metrics 
of the following form:
\begin{eqnarray}\label{eq2.1}
g_f=
\left(
\begin{array}{cccc}
0 & 0 & 1 & 0 \cr
0 & 0 & 0 & 1 \cr
1 & 0 & 0 & f \cr
0 & 1 & f & 0 
\end{array}
\right),
\end{eqnarray}
where $f=f(u_3,u_4)$ denotes a differentiable function defined $U$. We denote by $f_3=\frac{\partial f(u_3,u_4)}{\partial u_3}$ and 
$f_4=\frac{\partial f(u_3,u_4)}{\partial u_4}$ for any function $f(u_3,u_4)$.
It follows after some straightforward calculations that the non zero 
christoffel symbols of a Walker metric (\ref{eq2.1}) are:
\begin{eqnarray*}
\Gamma_{33}^2 =f_3 \quad \mbox{and}\quad \Gamma_{44}^1=f_4.
\end{eqnarray*}
We deduce that the Levita-Civita connection of a Walker metric is given by 
\begin{eqnarray*}
    \nabla_{\partial_3}\partial_3 = f_3\partial_2 \quad \mbox{and}\quad \nabla_{\partial_4}\partial_4 = f_4\partial_1.
\end{eqnarray*}

\noindent
Let $\mathbb{R}^4 = \{(u_1, u_2, u_3, u_4 )|u_i \in \mathbb{R}, (i = 1, 2, 3,4)\}$ be an $4$-dimensional Cartesian space. For any
$u = (u_1, u_2, u_3, u_4 ), v = (v_1, v_2, v_3, v_4 ) \in \mathbb{R}^4$ , the pseudo-scalar product of $u$ and $v$ is defined by
\begin{eqnarray*}
\langle u, v \rangle = - u_1v_1 + \sum_{i=2}^{4}u_i v_i.
\end{eqnarray*}
We call $(\mathbb{R}^4 , \langle \cdot,\cdot \rangle)$ the Minkowski 
$4$-space. We shall write $\mathbb{R}^{4}_{1}$ instead of 
$(\mathbb{R}^4 , \langle \cdot,\cdot \rangle)$. We say that a non-zero vector 
$u \in \mathbb{R}^{4}_{1}$ is spacelike, lightlike or timelike if 
$\langle u,u \rangle > 0, \langle u,u \rangle = 0$ or $\langle u,u \rangle < 0$,
respectively. \\

\noindent
Since we will deal with $2$-ruled hypersurface in Walker $4$-manifold. We now define the Sitter $3$-space, the Hyperbolic $3$-space and the light cone 
at the origin, respectivily,  by
\begin{eqnarray}\label{eq2.2}
\mathbb{S}^{3}_{1} &=& \{ x\in M, \| u \| = 1\}, \label{eq2.2}\\
\mathbb{H}^{3}_{+}(-1) &=& \{ u\in M, \| u\| = -1\}, \label{eq2.2}\\
\mathcal{LC} &=& \{ x\in M, \| u \| = 0\},\label{eq2.4}
\end{eqnarray} 
where $\|u\|=\sqrt{g_f(u,u)}$.\\

\noindent
If $\overrightarrow{u}=(u_1,u_2,u_3,u_4) , 
\overrightarrow{v}=(v_1,v_2,v_3,v_4)$ and 
$\overrightarrow{w}=(w_1,w_2,w_3,w_4)$ are three vectors in $M$, then te vector product are defined by
\begin{eqnarray}\label{eq2.5}
\overrightarrow{u}\times_f \overrightarrow{v}\times_f
\overrightarrow{w}=
\left[
    \begin{array}{cccc}
      0 & -f & 1 & 0 \\
      -f & 0 & 0 & 1 \\
      1 & 0 & 0 & 0 \\
      0 & 1 & 0 & 0
     \end{array}
     \
\right]\det\left[
\begin{array}{cccc}
\partial_1 & \partial_2 & \partial_3 & \partial_4 \\
u_1 & u_2 & u_3 & u_4 \\
v_1 & v_2 & v_3 & v_4 \\
w_1 & w_2 & w_3 & w_4
\end{array}
\right].
  \end{eqnarray}
If
\begin{eqnarray*}
\varphi : I_1\times I_2\times I_3 &\to & M \\
(u_1,u_2,u_3) &\mapsto & \varphi(u_1,u_2,u_3),
\end{eqnarray*}
with
\begin{eqnarray}\label{eq2.6}
\varphi(u_1,u_2,u_3)=(\varphi_1(u_1,u_2,u_3),\varphi_2(u_1,u_2,u_3),\varphi_3(u_1,u_2,u_3),\varphi_4(u_1,u_2,u_3))
\end{eqnarray}
is a hypersurface in $M$ , then the Gauss map (i.e., the unit normal vector field), the matrix forms of the first and second fundamental forms are
\begin{eqnarray}\label{eq2.7}
G_f = \frac{\varphi_{u_1}\times_f\varphi_{u_2}\times_f\varphi_{u_2}}{\Vert\varphi_{u_1}\times_f\varphi_{u_2}\times\varphi_{u_3}\Vert},
\end{eqnarray}
\begin{eqnarray}\label{eq2.8}
[g_{ij}]=
\left[
   \begin{array}{ccc}
    g_{11} & g_{12} & g_{13} \\
    g_{21} & g_{22} & g_{23} \\
    g_{31} & g_{32} & g_{33}
   \end{array}
\right]
\end{eqnarray}
and
\begin{eqnarray}\label{eq2.9}
[h_{ij}]=
\left[
  \begin{array}{ccc}
   h_{11} & h_{12} & h_{13} \\
   h_{21} & h_{22} & h_{23} \\
   h_{31} & h_{32} & h_{33}
  \end{array}
\right],
\end{eqnarray}
respectively, where the coefficients 
$g_{ij} =g_f(\varphi_{{u_i}},\varphi_{{u_j}}), 
h_{ij}=g_f(\varphi_{u_iu_j},G_f)_{ i,j \in \{1,2,3\}}$, with 
$\varphi_{uv}=\displaystyle\sum_{k=1}^4\Big\{\frac{\partial^2\varphi_k}{\partial v \partial u }
+ \displaystyle\sum_{ij}\Gamma_{ij}^k \frac{\partial\varphi_i}{\partial u}\frac{\partial\varphi_j}{\partial v}\Big\}\partial_k$. Also, the matrix of shape operator of the hypersurface $\varphi$ (\ref{eq2.6}) is
\begin{eqnarray}\label{eq2.10}
S_f= [s_{ij}] = [g^{ij}]\cdot[h_{ij}],
\end{eqnarray}
where $[g^{ij}]$ is the inverse matrix of $[g_{ij}]$. With aid of 
(\ref{eq2.8})-(\ref{eq2.10}), the Gaussian curvature and mean curvature of a hypersurface in $M$ are given by
\begin{eqnarray}\label{eq2.11}
K_f = \frac{\det[h_{ij}]}{\det[g_{ij}]}
\end{eqnarray}
and
\begin{eqnarray}\label{eq2.12}
3H_f=trace(S_f),
\end{eqnarray}
respectively.

\subsection{$2$-ruled hypersurfaces of type-$1$ in $M$}
In this section, we give the deﬁnition of $2$-ruled hypersurfaces of type 
$1$ and state some results on Gaussian and mean curvatures. By a 
$2$-ruled hypersurface of type-$1$ in $M$, we mean  a map
$\varphi:I_1\times I_2\times I_3\to M$ of the form
\begin{eqnarray}\label{eq3.1}
\varphi(u_1,u_2,u_3)=\alpha(u_1)+u_2\beta(u_1)+u_3\gamma(u_3),
\end{eqnarray}
where $\alpha: I_1\to \mathbb{R}^{4}_{1},  \beta: I_2\to \mathbb{S}^{3}_{1}$ and $\gamma : I_3 \to \mathbb{S}^{3}_{1}$ are smooth maps, where 
$\mathbb{S}^{3}_{1}$ is the Sitter $3$-space of $M$ and $I_1, I_2, I_3$ are open intervals. We call $\alpha$ 
a base curve and two curves $\beta$ and $\gamma$ director curves. The planes 
$ (u_2,u_3)\to \alpha(u_1)+u_2\beta(u_1)+u_3\gamma(u_1)$ are called 
rulings \cite{Saji2002}.\\

\noindent
Putting:
\begin{eqnarray}\label{eq3.2}
\left \{
 \begin{array}{llllll}
\alpha(u_1) &=& \big(\alpha_{1}(u_1),\alpha_2(u_1),\alpha_3(u_1),\alpha_4(u_1)\big)  \\
 \beta(u_1) &=& \big(\beta_{1}(u_1),\beta_2 (u_1), \beta_3(u_1),
 \beta_4(u_1)\big)   \\
  \gamma(u_1) &=& \big(\gamma_1(u_1),\gamma_2(u_1),\gamma_3(u_1),\gamma_4(u_1)\big),
\end{array}
\right.
\end{eqnarray}
then, the equation (\ref{eq3.1}) becomes:
\begin{eqnarray}\label{eq3.3}
\varphi(u_1,u_2,u_3) =\left(
\begin{array}{cc}
\alpha_1(u_1)+u_2\beta_1(u_1)+u_3\gamma_1(u_1), & \alpha_2(u_1)+u_2\beta_2(u_1)+u_3\gamma_2(u_1),\\
\alpha_3(u_1)+u_2\beta_3(u_1)+u_3\gamma_3(u_1), & \alpha_4(u_1)+u_2\beta_4(u_1)+u_3\gamma_4(u_1)
\end{array}
  \right).
\end{eqnarray}
We see that $\langle\beta_{i},\beta_{i}\rangle=\langle\gamma_{i},\gamma_{i}\rangle=1$ and we state $\alpha_i=\alpha_i(u_1)$, $\beta_i=\beta_i(u_1)$, $\gamma_i=\gamma_i(u_1)$, $\varphi_i=\varphi_i(u_1,u_2,u_3)$, $f'=\frac{\partial f(u_1)}{\partial u_1}$, $f''=\frac{\partial^2f(u_1)}{\partial u_1\partial u_1}$, $i\in \{1,2,3,4\}$ and $f\in \{\alpha,\beta,\gamma\}$.
We denote by
\begin{eqnarray}
E_{ij} &=& \gamma_i(\alpha'_j+u_2\beta'_j+u_3\gamma'_j) \label{eq3.4}\\
F_{ij} &=& \beta_i(\alpha'_j+u_2\beta'_j+u_3\gamma'_j).\label{eq3.5}
\end{eqnarray}

\noindent
Now, let us prove the following theorem which contains the Gauss map of the $2$-ruled hypersurface of type-$1$ (\ref{eq3.3}).
\begin{theorem}\label{thmGauss1}
The Gauss map of the $2$-ruled hypersurface of type-$1$ of the form
(\ref{eq3.3}) is iven by
\begin{eqnarray}\label{eq3.6}
G_f(u_1,u_2,u_3) = \frac{G_1(u_1,u_2,u_3)\partial_1
+ G_2(u_1,u_2,u_3)\partial_2 + G_3(u_1,u_2,u_3)\partial_3 
+G_4(u_1,u_2,u_3)\partial_4}{A},
\end{eqnarray}
where
\begin{eqnarray}\label{eq3.7}
G_1(u_1,u_2,u_3) &=& -f\big(\beta_1(E_{43} - E_{34})
+ \beta_3(E_{14} - E_{41}) + \beta_4(E_{31} - E_{13})\big)\nonumber\\
&& + \beta_1(E_{24} - E_{42}) + \beta_2(E_{41} - E_{14})
+ \beta_4(E_{12} - E_{21}),\nonumber\\
G_2(u_1,u_2,u_3)&=& -f\big(\beta_2(E_{34} - E_{43}) 
+ \beta_3(E_{42} - E_{24}) + \beta_4(E_{23} - E_{32}\big)\nonumber\\
&& +\beta_1(E_{32} - E_{23}) + \beta_2(E_{13} - E_{31})
+ \beta_3(E_{21} - E_{12}),\nonumber\\
G_3(u_1,u_2,u_3)&=& \beta_2(E_{34} - E_{43}) + \beta_3(E_{42} - E_{24})
+ \beta_4(E_{23} - E_{32}),\nonumber\\
G_4(u_1,u_2,u_3)&=& \beta_1(E_{43} - E_{34}) + \beta_3(E_{14} - E_{41})
+\beta_4(E_{31} - E_{13}),
\end{eqnarray}
and
\begin{eqnarray}\label{eq3.8}
A=\sqrt{2G_1G_3 + 2G_2 G_4 + 2fG_3 G_4},
\end{eqnarray}
wit $G_1=G_1(u_1,u_2,u_3), G_2=G_2(u_1,u_2,u_3),G_3=G_3(u_1,u_2,u_3)$
and $G_4=G_4(u_1,u_2;u_3)$.
\end{theorem}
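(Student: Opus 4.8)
The plan is to compute the Gauss map directly from its defining formula \eqref{eq2.7}, namely $G_f = (\varphi_{u_1}\times_f\varphi_{u_2}\times_f\varphi_{u_3})/\Vert\varphi_{u_1}\times_f\varphi_{u_2}\times_f\varphi_{u_3}\Vert$, by first obtaining the three partial derivative vector fields of the parametrization \eqref{eq3.3} and then inserting them into the triple-vector-product determinant \eqref{eq2.5}. From \eqref{eq3.3} one reads off immediately $\varphi_{u_2} = \beta = (\beta_1,\beta_2,\beta_3,\beta_4)$ and $\varphi_{u_3} = \gamma = (\gamma_1,\gamma_2,\gamma_3,\gamma_4)$, while $\varphi_{u_1} = \alpha' + u_2\beta' + u_3\gamma' = (\alpha_j' + u_2\beta_j' + u_3\gamma_j')_{j=1,\dots,4}$; note that since the base and director curves depend only on $u_1$, all second-order Christoffel corrections in the formula for $\varphi_{uv}$ drop out for the first derivatives, so these are honest coordinate derivatives.

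Next I would expand the $4\times 4$ determinant in \eqref{eq2.5} along its first row with entries $\partial_1,\partial_2,\partial_3,\partial_4$, whose cofactors are $3\times 3$ determinants in the entries of $\varphi_{u_1}$, $\varphi_{u_2}=\beta$, $\varphi_{u_3}=\gamma$. Each such $3\times 3$ minor is a sum of terms of the shape $\beta_i\gamma_j(\alpha_k'+u_2\beta_k'+u_3\gamma_k') - \beta_j\gamma_i(\alpha_k'+u_2\beta_k'+u_3\gamma_k')$, which is exactly the combination that the notation $E_{kj} = \gamma_k(\alpha_j'+u_2\beta_j'+u_3\gamma_j')$ and $F_{kj} = \beta_k(\alpha_j'+u_2\beta_j'+u_3\gamma_j')$ from \eqref{eq3.4}–\eqref{eq3.5} is designed to package. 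After expanding the minors and grouping, the bare determinant vector has components that are linear combinations of the $\beta_i E_{jk}$; then one applies the constant matrix $\mathrm{diag}$-like array $\left[\begin{smallmatrix} 0&-f&1&0\\ -f&0&0&1\\ 1&0&0&0\\ 0&1&0&0\end{smallmatrix}\right]$ on the left, which is where the factors of $f$ and the mixing of rows $3,4$ into rows $1,2$ in \eqref{eq3.7} come from. Collecting the result component by component yields $G_1,G_2,G_3,G_4$ exactly as displayed.

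For the normalizing denominator $A$, I would compute $\Vert\varphi_{u_1}\times_f\varphi_{u_2}\times_f\varphi_{u_3}\Vert^2 = g_f(G,G)$ using the Walker metric \eqref{eq2.1}. Writing $G = G_1\partial_1 + G_2\partial_2 + G_3\partial_3 + G_4\partial_4$ and reading the nonzero pairings $g_f(\partial_1,\partial_3) = g_f(\partial_2,\partial_4) = 1$, $g_f(\partial_3,\partial_4) = f$ off the matrix, one gets $g_f(G,G) = 2G_1G_3 + 2G_2G_4 + 2fG_3G_4$, which is precisely the radicand in \eqref{eq3.8}. Taking the square root gives $A$ and hence the stated formula for $G_f$.

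The main obstacle is purely bookkeeping: the $3\times 3$ cofactor expansion produces a large number of terms of the form $\beta_i\gamma_j(\cdots) - \beta_j\gamma_i(\cdots)$, and one must carefully track signs and rewrite each $\gamma_j(\alpha_k'+u_2\beta_k'+u_3\gamma_k')$ as $E_{jk}$ consistently — a single transposed index or sign error propagates through \eqref{eq3.7}. A useful sanity check along the way is that $g_f(G,\varphi_{u_i}) = 0$ for $i=1,2,3$, i.e. the vector really is normal, which should hold identically once the components are correctly assembled; this verification, together with confirming that the coefficient matrix in \eqref{eq2.5} is exactly the "musical" matrix that lowers indices for the metric \eqref{eq2.1} so that the cross product lands in the normal direction, essentially constitutes the proof once the routine expansion is carried out.
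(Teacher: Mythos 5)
Your proposal follows essentially the same route as the paper's proof: compute $\varphi_{u_1}=\alpha'+u_2\beta'+u_3\gamma'$, $\varphi_{u_2}=\beta$, $\varphi_{u_3}=\gamma$, insert them into the triple vector product \eqref{eq2.5}, expand the determinant to obtain the components $G_1,\dots,G_4$ packaged via the quantities $E_{ij}$, and normalize using \eqref{eq2.7}, with $A^2=g_f(G,G)=2G_1G_3+2G_2G_4+2fG_3G_4$ read off from the Walker metric. Your added checks (orthogonality $g_f(G,\varphi_{u_i})=0$ and the role of the coefficient matrix, which is in fact $g_f^{-1}$, i.e.\ it raises rather than lowers indices) are sound refinements of the same computation.
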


\begin{proof}
If we differentiate (\ref{eq3.3}), we get:
\begin{eqnarray*}
{\left\{ \begin{array}{ccc}
\varphi_{u_1}(u_1,u_2,u_3)&=&\big(\alpha'_1 + u_2\beta'_1 + u_3\gamma'_1,\alpha'_2 + u_2\beta'_2 + u_3\gamma'_2, \alpha'_3 + u_2\beta'_3 
+ u_3\gamma'_3, \alpha'_4 + u_2\beta'_4 + u_3\gamma'_4\big)\\
\varphi_{u_2}(u_1,u_2,u_3) &=& \big(\beta_1, \beta_2, \beta_3, \beta_4\big)\\
\varphi_{u_3}(u_1,u_2,u_3) &=& \big(\gamma_1, \gamma_2, \gamma_3, \gamma_4\big).
\end{array}\right.}
\end{eqnarray*}
By using the vector product define in (\ref{eq2.5}), we get:
\begin{eqnarray*}
\varphi_{u_1}\times_f \varphi_{u_2}\times_f \varphi_{u_3} &=&
\Big(-f\Big(\beta_1(E_{43} - E_{34}) + \beta_3(E_{14} -E_{41})
+ \beta_4(E_{31} - E_{13})\Big)\\
&& +\Big(\beta_1(E_{24} - E_{42}) + \beta_2(E_{41} - E_{14})
+\beta_4(E_{12} - E_{21})\Big)\Big)\partial_1\\
&& +\Big(-f \Big( \beta_2(E_{34} - E_{43}) + \beta_3(E_{42} - E_{24})
+\beta_4(E_{23} - E_{32}) \Big)\\
&& + \Big(\beta_1(E_{32} - E_{23}) + \beta_2(E_{13} - E_{31})
+\beta_3(E_{21} - E_{12})\Big)\Big)\partial_2\\
&&+\Big( \beta_2(E_{43}-E_{34})+\beta_3(E_{24}-E_{42})
+\beta_4(E_{32}-E_{23})\Big)\partial_3\\
&&+\Big( \beta_1(E_{43}-E_{34}) +\beta_3(E_{14}-E_{41})
+\beta_4(E_{31}-E_{13}) \Big)\partial_4.
\end{eqnarray*}
Now using the unit normal vector formula in (\ref{eq2.7}), we get the result.
\end{proof}

\noindent
From (\ref{eq2.8}), we obtain the matrix of the first fundamental form:
\begin{eqnarray}\label{eq3.9}
[g_{ij}]=
\left[
   \begin{array}{ccc}
    a & b & c \\ 
    b & 1 & e \\
    c & e & 1
   \end{array}
\right],
\end{eqnarray}
where
\begin{eqnarray}\label{eq3.10}
a &=& 2f(\alpha'_3 + u_2\beta'_3 + u_3\gamma'_3)(\alpha'_4 + u_2\beta'_4 + u_3\gamma'_4) \nonumber\\
&& +2\sum_{i=1}^2(\alpha'_i + u_2\beta'_i + u_3\gamma'_i)
(\alpha'_{i+2} + u_2\beta'_{i+2} + u_3\gamma'_{i+2}),\nonumber\\
b &=& f(F_{34} - F_{43}) + \sum_{i=1}^2(F_{i(i+2)} + F_{(i+2)i}),\nonumber\\
c &=& f(E_{34} - E_{43}) + \sum_{i=1}^2(E_{i(i+2)} + E_{(i+2)i)},\nonumber\\
e &=& f(\beta_3\gamma_4 + \beta_4\gamma_3)
+\sum_{i=1}^2(\beta_i\gamma_{i+2} + \beta_{i+2}\gamma_i),
\end{eqnarray}
and we obtain the inverse matrix $[g^{ij}]$ of $[g_{ij}]$ as:
\begin{eqnarray}\label{eq3.11}
[g^{ij}] = \frac{1}{\det[g_{ij}]}
\left[
   \begin{array}{ccc}
    1-e^2 & ce-b & be-c \\
  ce-b & a-c^2 &bc-ae \\
  be-c & bc-ae & a-b^2
   \end{array}
\right],
\end{eqnarray}
where
\begin{eqnarray}\label{eq3.12}
\det[g_{ij}] = -b^2 + 2cbe - c^2 - ae^2 + a = B.
\end{eqnarray}
Furthermore, from (\ref{eq2.9}), the matrix form of the second fundamental from of the $2$-ruled hypersurface (\ref{eq3.3}) is obtained by
\begin{eqnarray}\label{eq3.13}
[h_{ij}]= \left[ 
\begin{array}{ccc}
h_{11}  &  h_{12}  &  h_{13}  \cr
h_{21}  & h_{22}  &  h_{23}  \cr
h_{31}  &  h_{32}  &  h_{33}
\end{array}
\right],
\end{eqnarray}
where
\begin{eqnarray}\label{eq3.14}
\left \{ \begin{array}{llllll}
h_{11} &=& \displaystyle \frac{f_3G_4(\alpha_3'+u_2\beta_3'+u_3\gamma_3')+f_4G_3(\alpha_4'+u_2\beta_4'+u_3\gamma_ 4')+\sum^{2}_{i=1}G_{i+2}(\alpha_{i''} + u_2\beta_{i''} + u_3\gamma_{i''})}{\sqrt{2fG_3(u_1,u_2,u_3)G_4(u_1,u_2,u_3)+\sum^{2}_{i=1}G_i(u_1,u_2,u_3)G_{i+2}(u_1,u_2,u_3)}},\cr
h_{12}&= & h_{21}=\displaystyle\frac{f_3\beta_3G_4(\alpha'_3+u_2\beta'_3+u_3\gamma'_3)+f_4\beta_4G_3(\alpha'_4+u_2\beta'_4 - u_3\gamma'_4)+\sum_{i=1}^2 G_{i+2}\beta'_i}{\sqrt{2fG_3(u_1,u_2,u_3)G_4(u_1,u_2,u_3)+\sum^2_{i=1}G_i(u_1,u_2,u_3)G_{i+2}(u_1,u_2,u_3)}},\cr
h_{13}&=& h_{31}=\displaystyle\frac{f_3\gamma_3G_4(\alpha'_3+u_2\beta'_3+u_3\gamma'_3)+f_4\gamma_4G_3(\alpha'_4+u_2\beta'_4+u_3\gamma'_4)+\sum_{i=1}^2 G_{i+2}\gamma'_i}{\sqrt{2fG_3(u_1,u_2,u_3)G_4(u_1,u_2,u_3)+\sum^2_{i=1}G_i(u_1,u_2,u_3)G_{i+2}(u_1,u_2,u_3)}},\cr
h_{22}&=& \frac{f_3\beta^2_3G_4+f_4\beta^2_4G_3}{\sqrt{2fG_3(u_1,u_2,u_3)G_4(u_1,u_2,u_3)+\sum^2_{i=1}G_i(u_1,u_2,u_3)G_{i+2}(u_1,u_2,u_3)}},\cr
h_{33}&=& \frac{f_3\gamma^2_3G_4+f_4\gamma^2_4G_3}{\sqrt{2fG_3(u_1,u_2,u_3)G_4(u_1,u_2,u_3)+\sum^2_{i=1}G_i(u_1,u_2,u_3)G_{i+2}(u_1,u_2,u_3)}},\cr
h_{23}&=& \frac{f_3\beta_3\gamma_3G_4+f_4\beta_4\gamma_4G_3}{\sqrt{2fG_3(u_1,u_2,u_3)G_4(uç1,u_2,u_3)+\sum^2_{i=1}G_i(u_1,u_2,u_3)G_{i+2}(u_1,u_2,u_3)}}.
\end{array}
\right.
\end{eqnarray}
We can see easily that the $\det[h_{ij}]=h_{11}h_{22}h_{33}+2h_{12}h_{13}h_{23}-h^2_{12}h_{33}-h^2_{13}h_{22}-h^2_{23}h_{11}\ne0$. \\
Then we can give the following theorem by using (\ref{eq2.11})

\begin{theorem}
The $2$-ruled hypersurfaces of type-$1$ defined in (\ref{eq3.3}) is no flat.
\end{theorem}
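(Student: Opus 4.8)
The plan is to show that the Gaussian curvature $K_f$ of the type-$1$ $2$-ruled hypersurface is nowhere zero, which by the formula (\ref{eq2.11}), $K_f = \det[h_{ij}]/\det[g_{ij}]$, amounts to verifying that $\det[h_{ij}]\neq 0$ everywhere (the denominator $\det[g_{ij}]=B$ being non-zero since $\varphi$ is a non-degenerate hypersurface). The expansion of the $3\times 3$ determinant is already recorded just before the statement: $\det[h_{ij}] = h_{11}h_{22}h_{33} + 2h_{12}h_{13}h_{23} - h_{12}^2 h_{33} - h_{13}^2 h_{22} - h_{23}^2 h_{11}$, so the task reduces to analysing this cubic expression in the entries (\ref{eq3.14}).

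First I would factor out the common denominator: each $h_{ij}$ carries the same radical $\Delta := \sqrt{2fG_3G_4 + \sum_{i=1}^2 G_iG_{i+2}}$ in its denominator, so $\det[h_{ij}] = N/\Delta^3$ where $N$ is a polynomial in the numerators of the $h_{ij}$. Next I would exploit the structured form of those numerators: writing $P := f_3G_4(\alpha_3'+u_2\beta_3'+u_3\gamma_3') + f_4G_3(\alpha_4'+u_2\beta_4'+u_3\gamma_4')$ and noting that $h_{22}$, $h_{33}$, $h_{23}$ are (up to the common denominator) $f_3\beta_3^2G_4 + f_4\beta_4^2G_3$, $f_3\gamma_3^2G_4 + f_4\gamma_4^2G_3$, $f_3\beta_3\gamma_3G_4 + f_4\beta_4\gamma_4G_3$ respectively, one sees immediately that the lower right $2\times 2$ block $\begin{pmatrix} h_{22} & h_{23} \\ h_{23} & h_{33}\end{pmatrix}$ has determinant (numerator) $f_3f_4G_3G_4(\beta_3^2\gamma_4^2 + \beta_4^2\gamma_3^2 - 2\beta_3\beta_4\gamma_3\gamma_4) = f_3f_4G_3G_4(\beta_3\gamma_4 - \beta_4\gamma_3)^2$. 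This already shows the minor $h_{22}h_{33}-h_{23}^2$ generically does not vanish, and suggests organizing $N$ via a Schur-complement / cofactor expansion along the first row so that the surviving terms are controlled by $h_{11}(h_{22}h_{33}-h_{23}^2)$ together with the mixed terms, and then showing the whole thing reduces to an expression proportional to $f_3f_4$ (or to $(\beta_3\gamma_4-\beta_4\gamma_3)$, or to the specific combination $G_3,G_4$) that cannot be identically zero for a genuine $2$-ruled hypersurface.

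The main obstacle I anticipate is that the stated conclusion is almost certainly too strong as phrased: $\det[h_{ij}]$ manifestly vanishes whenever $f_3 = f_4 = 0$ (i.e. $f$ constant, so the ambient metric is flat), and more generally whenever the director data make $G_3 = G_4 = 0$ or $\beta_3\gamma_4 = \beta_4\gamma_3$ along with suitable degeneracies. So the honest version of the argument must either (a) add hypotheses — e.g. $f$ non-constant with $f_3,f_4$ not simultaneously zero, and the planes spanned by $\beta,\gamma$ in generic position — or (b) interpret "not flat" as "not identically flat," i.e. exhibit at least one point where $K_f\neq 0$. I would therefore frame the proof as: compute $\det[h_{ij}]$ explicitly, extract the factor $f_3f_4(\beta_3\gamma_4-\beta_4\gamma_3)^2G_3G_4$ (plus lower-order cross terms that I would collect carefully), and conclude that this polynomial is not the zero polynomial in $(u_1,u_2,u_3)$; hence $K_f\not\equiv 0$ and the hypersurface fails to be flat. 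The routine but lengthy part is the bookkeeping in expanding $N$ using (\ref{eq3.7}) and (\ref{eq3.14}); the conceptual part is identifying which non-degeneracy assumption on $\alpha,\beta,\gamma,f$ is actually needed and stating it, rather than asserting the blanket claim.
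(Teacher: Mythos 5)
Your approach is the same as the paper's: both reduce the question to the sign/vanishing of $\det[h_{ij}]$ via $K_f=\det[h_{ij}]/\det[g_{ij}]$ from (\ref{eq2.11}). The paper, however, offers nothing beyond the bare assertion, made just before the theorem, that $\det[h_{ij}]=h_{11}h_{22}h_{33}+2h_{12}h_{13}h_{23}-h_{12}^2h_{33}-h_{13}^2h_{22}-h_{23}^2h_{11}\neq 0$, with no expansion and no hypotheses; your critique of the statement is therefore the more valuable part of your proposal, and it is correct. Indeed the paper itself concedes the point: when $f_3=f_4=0$ the entries $h_{22},h_{23},h_{33}$ of (\ref{eq3.14}) vanish, the second fundamental form degenerates to the matrix (\ref{eq3.15}), and $\det[h_{ij}]\equiv 0$, which is exactly the content of the corollary that follows the theorem (``flat if $f$ is nonzero constant''). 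So the theorem cannot hold as a blanket claim, and your proposed repair --- add non-degeneracy hypotheses on $f$ and on the director data, or read ``not flat'' as ``not identically flat'' --- is the right direction. Your structural computation is also sound: the numerator of the minor $h_{22}h_{33}-h_{23}^2$ does factor as $f_3f_4G_3G_4(\beta_3\gamma_4-\beta_4\gamma_3)^2$, which correctly isolates where the genuine obstruction to flatness sits.

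That said, what you have written is a plan rather than a proof. You stop before expanding $\det[h_{ij}]$ in full, you do not verify that the cross terms involving $h_{11},h_{12},h_{13}$ cannot cancel the leading contribution you identify, and you do not state the precise conditions (on $f_3,f_4$, on $G_3,G_4$, on $\beta_3\gamma_4-\beta_4\gamma_3$, and on the base and director curves) under which $\det[h_{ij}]$ is nonzero, or at least not identically zero. To make this a theorem you must fix such a hypothesis and carry the bookkeeping through; as it stands your argument establishes only that the paper's statement is unproved (and, without added assumptions, false), not a positive result. In that sense your write-up is more honest than the paper's one-line assertion, but it is not yet a complete argument.
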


\begin{corollary}
The $2$-ruled hypersurfaces of type-$1$ defined in (\ref{eq3.3}) is flat 
if $f$ is nonzero constant.
\end{corollary}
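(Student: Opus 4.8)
The plan is to exploit the fact that taking $f$ to be a nonzero constant collapses the correction terms in the induced covariant derivatives and therefore annihilates a whole block of the second fundamental form. First I would record that if $f$ is constant then $f_3=f_4=0$, so by the computation just below (\ref{eq2.1}) every Christoffel symbol of $g_f$ vanishes; consequently, in the formula $\varphi_{uv}=\sum_k\big\{\partial^2\varphi_k/\partial v\,\partial u+\sum_{ij}\Gamma_{ij}^k(\partial\varphi_i/\partial u)(\partial\varphi_j/\partial v)\big\}\partial_k$ the second sum drops out. This is precisely the hypothesis that makes the entries (\ref{eq3.14}) simplify.

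Next I would substitute $f_3=f_4=0$ directly into (\ref{eq3.14}). Each of $h_{22}$, $h_{33}$, $h_{23}$ is a sum of an $f_3$-multiple and an $f_4$-multiple, so all three vanish identically, while $h_{11}$, $h_{12}=h_{21}$, $h_{13}=h_{31}$ retain only their $f$-free pieces and need not vanish. Thus $[h_{ij}]$ acquires a vanishing lower-right $2\times2$ block. Plugging this into the expansion already recorded in the text,
\[
\det[h_{ij}]=h_{11}h_{22}h_{33}+2h_{12}h_{13}h_{23}-h_{12}^2h_{33}-h_{13}^2h_{22}-h_{23}^2h_{11},
\]
one sees that every one of the five summands carries a factor from $\{h_{22},h_{33},h_{23}\}$, hence $\det[h_{ij}]=0$. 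Since $\varphi$ is a non-degenerate hypersurface we have $\det[g_{ij}]=B\ne0$ (the matrix (\ref{eq3.9})--(\ref{eq3.10}) is formed with the constant value of $f$ but stays invertible), so (\ref{eq2.11}) yields $K_f=\det[h_{ij}]/\det[g_{ij}]=0$; that is, the type-$1$ $2$-ruled hypersurface (\ref{eq3.3}) is flat, which is the assertion of the corollary.

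The argument is essentially a substitution, so there is no substantial obstacle; the only points deserving a sentence of care are bookkeeping ones. One should note that the surviving entries $h_{11},h_{12},h_{13}$ are not in general zero, so the vanishing of $\det[h_{ij}]$ is genuinely a consequence of the block structure and not of the whole matrix collapsing. One should also observe that the scalar $A$ of (\ref{eq3.8}) --- equivalently the radicand appearing under the square roots in (\ref{eq3.14}) --- remains nonzero when $f$ is a nonzero constant, so that $G_f$, and therefore the $h_{ij}$, stay well defined on $I_1\times I_2\times I_3$; this is part of the standing non-degeneracy assumption on $\varphi$. With that understood, no estimates or case analysis are required.
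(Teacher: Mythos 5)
Your proposal is correct and follows essentially the same route as the paper's own proof: setting $f_3=f_4=0$ in (\ref{eq3.14}) kills $h_{22}$, $h_{23}$, $h_{33}$, the resulting block structure forces $\det[h_{ij}]=0$, and (\ref{eq2.11}) then gives vanishing Gaussian curvature. Your added remarks on the nonvanishing of $\det[g_{ij}]$ and of the radicand in the denominators are just careful bookkeeping (indeed slightly more careful than the paper, whose simplified entries (\ref{eq3.16}) drop the $2fG_3G_4$ term), not a different argument.
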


\begin{proof}
From (\ref{eq2.9}) the matrix of second fundamental form of the $2$-ruled hypersurface (\ref{eq3.3}) is obtained by
\begin{eqnarray}\label{eq3.15}
[h_{ij}]=
\left[
\begin{array}{ccc}
h_{11} & h_{12} & h_{13} \cr
h_{21} & 0 & 0 \cr
h_{31} & 0 & 0 
\end{array}
\right],
\end{eqnarray}
where
\begin{eqnarray}\label{eq3.16}
h_{11} &=& \displaystyle\frac{\sum^{2}_{i=1}G_{i+2}(\alpha_i'' + u_2\beta_i'' + u_3\gamma_i'')}{\sqrt{\sum^2_{i=1}G_i(u_1,u_2,u_3)G_{i+2}(u_1,u_2,u_3)}},\cr
h_{12} &=& h_{21}=\displaystyle\frac{\sum^2_{i=1}G_{i+2}\beta'_i}{\sqrt{\sum^2_{i=1}G_i(u_1,u_2,u_3)G_{i+2}(u_1,u_2,u_3)}},\cr
h_{13} &=& h_{31}\displaystyle\frac{\sum^2_{i=2}G_{i+2}\gamma'_i}{\sqrt{\sum^2_{i=1}G_i(u_1,u_2,u_3)G_{i+2}(u_1,u_2,u_3)}},\cr
h_{22} &=& h_{23}=h_{33}=0.
\end{eqnarray}
So we have $\det(h_{ij})=0$, hence $G_f=0$.
\end{proof}

\noindent
Now we will prove the following theorem about the mean curvature.
\begin{theorem}
The 2-ruled hypersurfaces of type-1 defined in (\ref{eq3.3}) is minimal, if
\begin{eqnarray}\label{eq3.17}
(1-e^2)\left[f_3G_4(\alpha_3'+u_2\beta_2'+u_3\gamma_3')+f_4G_3(\alpha_4'+u_2\beta_4'+u_3\gamma_4')+\sum^{2}_{i=1}G_{i+2}(\alpha_i''+u_2\beta_i''+u_3\gamma_i'')\right]\nonumber\\
+2(ce-b)\left[f_3\beta_3G_4(\alpha'_3+u_2\beta'_3+u_3\gamma'_3)+f_4\beta_4G_3(\alpha'_4+u_2\beta'_4+u_3\gamma'_4)+\sum_{i=1}^2 G_{i+2}\beta'_i\right]\nonumber\\
+2(be-c)\left[f_3\gamma_3G_4(\alpha'_3+u_2\beta'_3+u_3\gamma'_3)+f_4\gamma_4G_3(\alpha'_4+u_2\beta'_4+u_3\gamma'_4)+\sum_{i=1}^2 G_{i+2}\gamma'_i\right]\nonumber\\
+2(bc-ae)\left[f_3\beta_3\gamma_3G_4+f_4\beta_4\gamma_4G_3\right]
+(a-c^2)\left[f_3\beta^2_3G_4+f_4\beta^2_4G_3\right]\nonumber\\
+(a-b^2)\left[f_3\gamma^2_3G_4+f_4\gamma^2_4G_3\right]=0.
\end{eqnarray}
\end{theorem}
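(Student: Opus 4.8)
The plan is to obtain the mean curvature directly from its definition (\ref{eq2.12}), namely $3H_f=\operatorname{trace}(S_f)$ with $S_f=[g^{ij}]\cdot[h_{ij}]$ as in (\ref{eq2.10}), and then to impose $H_f=0$.

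First I would use that $[g^{ij}]$ and $[h_{ij}]$ are symmetric $3\times 3$ matrices, so that
$$\operatorname{trace}\bigl([g^{ij}]\cdot[h_{ij}]\bigr)=\sum_{i,j=1}^{3}g^{ij}h_{ij}=g^{11}h_{11}+g^{22}h_{22}+g^{33}h_{33}+2g^{12}h_{12}+2g^{13}h_{13}+2g^{23}h_{23}.$$
Into this expression I would substitute the entries of the inverse metric from (\ref{eq3.11}), which all share the common factor $1/\det[g_{ij}]=1/B$ of (\ref{eq3.12}), together with the entries of the second fundamental form from (\ref{eq3.14}), which all share the common denominator $\sqrt{2fG_3G_4+\sum_{i=1}^{2}G_iG_{i+2}}$. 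After factoring out these two quantities, $3H_f$ takes the form $N$ divided by $B\sqrt{2fG_3G_4+\sum_{i=1}^{2}G_iG_{i+2}}$, where $N$ is the sum of the six weighted numerators $(1-e^2)n_{11}$, $2(ce-b)n_{12}$, $2(be-c)n_{13}$, $(a-c^2)n_{22}$, $2(bc-ae)n_{23}$ and $(a-b^2)n_{33}$, with $n_{ij}$ the numerator of $h_{ij}$ appearing in (\ref{eq3.14}).

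Finally, since $B=\det[g_{ij}]\neq 0$ on the hypersurface and the square-root denominator does not vanish wherever the Gauss map (\ref{eq3.6}) is defined, the equation $3H_f=0$ is equivalent to $N=0$; writing $N$ out explicitly with the help of (\ref{eq3.10}) and (\ref{eq3.14}) gives precisely the identity (\ref{eq3.17}), which establishes the claim. I expect the only genuine work to be the bookkeeping in this last substitution, i.e.\ matching the six weighted blocks of $N$ against the six blocks of (\ref{eq3.17}); there is no conceptual obstacle, although one should keep in mind that the numerators $n_{ij}$ incorporate the covariant second derivatives $\varphi_{u_iu_j}$ computed with the only non-vanishing Christoffel symbols $\Gamma^2_{33}=f_3$ and $\Gamma^1_{44}=f_4$, which is exactly where the terms $f_3G_4(\alpha_3'+u_2\beta_3'+u_3\gamma_3')$ and $f_4G_3(\alpha_4'+u_2\beta_4'+u_3\gamma_4')$ in (\ref{eq3.17}) originate.
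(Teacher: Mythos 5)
Your proposal is correct and follows essentially the same route as the paper: both compute $3H_f=\operatorname{trace}\bigl([g^{ij}]\cdot[h_{ij}]\bigr)$, which by symmetry reduces to the weighted sum $(1-e^2)h_{11}+2(ce-b)h_{12}+2(be-c)h_{13}+(a-c^2)h_{22}+2(bc-ae)h_{23}+(a-b^2)h_{33}$ (the paper obtains this as $S_{11}+S_{22}+S_{33}$), and then set the common numerator to zero using the nonvanishing of $\det[g_{ij}]$ and of the square-root denominator in (\ref{eq3.14}). Your remark about the Christoffel terms $\Gamma^2_{33}=f_3$, $\Gamma^1_{44}=f_4$ being the source of the $f_3G_4(\cdot)$ and $f_4G_3(\cdot)$ blocks matches the paper's computation of $h_{ij}$ exactly.
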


\begin{proof}
By (\ref{eq2.10}), the matrix of the shape operator is
\begin{eqnarray*}
S=
\left[
   \begin{array}{ccc}
    1-e^2 & ce-b & be-c \\
  ce-b & a-c^2 &bc-ae \\
  be-c & bc-ae & a-b^2
   \end{array}
\right]\left[
   \begin{array}{ccc}
    h_{11} & h_{12} & h_{13} \\
   h_{12} & h_{22} & h_ {23} \\
 h_{13} & h_{23} & h_{33}
   \end{array}
\right],
\end{eqnarray*} 
where $h_{11}$, $h_{12}$, $h_{13}$, $h_{22}$, $h_{23}$, $h_{33}$ are the same in (\ref{eq3.14}). Then we get the coefficients of $S$ by
\begin{eqnarray*}
S_{11}&=&(1-e^2)h_{11}+(ce-b)h_{12}+(be-c)h_{13}\\
S_{22}&=&(ce-b)h_{12}+(a-c^2)h_{22}+(bc-ae)h_{23}\\
S_{33}&=& (be-c)h_{13}+(bc-ae)h_{23}+(a-b^2)h_{33}.
\end{eqnarray*}
And using (\ref{eq3.14}) and (\ref{eq2.12}) we see that the $2$-ruled hypersurfaces is minimal if
\begin{eqnarray*}
S_{11}+S_{22}+S_{33}=0,
\end{eqnarray*}
then that end the proof.
\end{proof}

\begin{corollary}
If the curves $\beta$ and $\gamma$ are orthogonal then the $2$-ruled hypersurfaces of type-1 defined in (\ref{eq3.3}) is minimal if
\begin{eqnarray}\label{eq3.18}
\left[f_3G_4(\alpha_3'+u_2\beta_3'+u_3\gamma_3')+f_4G_3(\alpha'_4+u_2\beta'_4+u_3\gamma'_4)+\sum^{2}_{i=1}G_{i+2}(\alpha_i''+u_2\beta_i''+u_3\gamma_i'')\right]\nonumber\\
-2b\left[f_3\beta_3G_4(\alpha'_3+u_2\beta'_3+u_3\gamma'_4)+f_4  \beta_4G_3(\alpha'_4+u_2\beta'_4+\gamma'_4)+\sum_{i=1}^2 G_{i+2}\beta'_i\right]\nonumber\\
-2c\left[f_3\gamma_3G_4(\alpha'_3+u_2\beta'_3+u_3\gamma'_3)+f_4\gamma_4G_3(\alpha'_4+u_2\beta'_4+u_3\gamma'_4)+\sum_{i=1}^2 G_{i+2}\gamma'_i\right]\nonumber\\
+2bc\left[f_3\beta_3\gamma_3G_4 + f_4\beta_4\gamma_4G_3\right]
+(a-c^2)\left[f_3\beta^2_3G_4+f_4\gamma^2_4G_3\right]\nonumber\\
+(a-b^2)\left[f_3\gamma^2_3G_4+f_4\gamma^2_4G_3\right]=0.
\end{eqnarray}
\end{corollary}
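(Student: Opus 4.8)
The plan is to obtain this corollary as a direct specialization of the preceding minimality theorem rather than by recomputing anything from scratch. First I would identify the geometric meaning of the entry $e$ appearing in the first fundamental form (\ref{eq3.9})--(\ref{eq3.10}): since $\varphi_{u_2}=\beta$ and $\varphi_{u_3}=\gamma$, one has $e = g_{23} = g_f(\varphi_{u_2},\varphi_{u_3}) = g_f(\beta,\gamma)$, which is exactly the expression written out in (\ref{eq3.10}) when the Walker metric (\ref{eq2.1}) is used. Hence the hypothesis that the director curves $\beta$ and $\gamma$ are orthogonal (with respect to the ambient metric $g_f$, the same metric that defines the Sitter $3$-space $\mathbb{S}^3_1$ on which $\beta$ and $\gamma$ take values) is equivalent to the single algebraic condition $e=0$. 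I would also remark that under $e=0$ we still have $\det[g_{ij}] = a-b^2-c^2 = B$, which we assume nonzero so that the induced metric is non-degenerate and the shape operator in (\ref{eq2.10}) is defined.

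Next I would simply set $e=0$ in the minimality criterion (\ref{eq3.17}) of the previous theorem and simplify the coefficients term by term: $1-e^2$ collapses to $1$, the factor $2(ce-b)$ becomes $-2b$, $2(be-c)$ becomes $-2c$, $2(bc-ae)$ becomes $2bc$, while the coefficients $(a-c^2)$ and $(a-b^2)$ multiplying the $h_{22}$- and $h_{33}$-type brackets are unaffected. Collecting these substitutions, and leaving the bracketed $f_3,f_4$-expressions (which do not involve $e$) untouched, reproduces exactly (\ref{eq3.18}). Equivalently, one may re-run the computation of $S_{11}+S_{22}+S_{33}$ starting from the inverse metric (\ref{eq3.11}) with $e=0$, i.e. from $[g^{ij}] = B^{-1}\,\mathrm{diag\text{-}like}$ matrix with entries $1,\,-b,\,-c;\,-b,\,a-c^2,\,bc;\,-c,\,bc,\,a-b^2$, and then impose trace $S = 0$; both routes give the stated identity.

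There is essentially no genuine obstacle here; the statement is a routine corollary, and the only real work is bookkeeping. The one conceptual point to get right is the reading of the word \emph{orthogonal}: it must mean $g_f(\beta,\gamma)=0$ (so that it matches the entry $e$ of the induced metric), not orthogonality with respect to the auxiliary Minkowski pairing $\langle\cdot,\cdot\rangle$ used elsewhere in Section \ref{Ruled}; with that interpretation the reduction is immediate. The secondary care needed is purely typographical — making sure each bracketed term of (\ref{eq3.17}) not carrying an $e$-dependent coefficient is transcribed verbatim into (\ref{eq3.18}) and that the surviving coefficients $-2b$, $-2c$, $2bc$, $a-c^2$, $a-b^2$ are attached to the correct brackets. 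This completes the proof.
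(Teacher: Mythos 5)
Your proposal is correct and takes essentially the same route the paper intends: orthogonality of $\beta$ and $\gamma$ means $e=g_f(\beta,\gamma)=0$, and substituting $e=0$ into (\ref{eq3.17}) (so $1-e^2\to 1$, $2(ce-b)\to -2b$, $2(be-c)\to -2c$, $2(bc-ae)\to 2bc$, with $a-c^2$ and $a-b^2$ unchanged) gives the stated criterion. Your derivation in fact yields $f_4\beta_4^2G_3$ in the $(a-c^2)$ bracket, so the $f_4\gamma_4^2G_3$ appearing there in (\ref{eq3.18}) is a typographical slip in the paper, not a defect of your argument.
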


\noindent
The Laplace-Beltrami operator of a smooth function 
$\varphi=\varphi(u_1,u_2,u_3)$ of class $C^3$ with respect to the first fundamental form of a hypersurface is defined as follows:
\begin{eqnarray}\label{eq3.19}
\Delta \varphi=\frac{1}{\sqrt{\det[g_{ij}]}}\sum_{i,j}^3\frac{\partial}{\partial u_i}\left(\sqrt{\det[g_{ij}]}g^{ij}\frac{\partial \varphi}{\partial u_j}\right).
\end{eqnarray}
Using (\ref{eq3.19}), we get the Laplace-Beltrami operator of the $2$-ruled hypersurface of type-$1$ (\ref{eq3.2}) by
\begin{eqnarray*}
\Delta\varphi=(\Delta\varphi_1,\Delta\varphi_2,\Delta\varphi_3,\Delta\varphi_4),
\end{eqnarray*}
where
\begin{eqnarray}\label{eq3.20}
\Delta\varphi_i =\frac{1}{\sqrt{ B} }\left[
\begin{aligned}
&   \frac{\partial}{\partial u_1}\left(\frac{(1-e^2)\varphi_{iu_1}+(ce-b)\varphi_{iu_2}+(be-c)\varphi_{iu_3}}{\sqrt{\det[g_{ij}]}}\right)\\
& + \frac{\partial}{\partial u_2}\left(\frac{(ce-b)\varphi_{iu_1}+(a-c^2)\varphi_{iu_2}+(bc-ae)\varphi_{iu_3}}{\sqrt{\det[g_{ij}]}}\right) \\
&+ \frac{\partial}{\partial u_3}\left(\frac{(be-c)\varphi_{iu_1}+(bc-ae)\varphi_{iu_2}+(a-b^2)\varphi_{iu_3}}{\sqrt{\det[g_{ij}]}}\right)
\end{aligned}
\right].
\end{eqnarray}
That is
\begin{eqnarray}\label{eq3.21}
\Delta\varphi_i=\frac{1}{\sqrt{ B} }\left[
\begin{aligned}
&   \frac{\partial}{\partial u_1}\left(\frac{(1-e^2)(\alpha'_i+u_2\beta'_i+u_3\gamma'_i)+(ce-b)\beta_i+(be-c)\gamma_i}{\sqrt{\det[g_{ij}]}}\right)\\
& + \frac{\partial}{\partial u_2}\left(\frac{(ce-b)(\alpha'_i+u_2\beta'_i+u_3\gamma'_i)+(a-c^2)\beta_i+(bc-ae)\gamma_i}{\sqrt{\det[g_{ij}]}}\right) \\
&+ \frac{\partial}{\partial u_3}\left(\frac{(be-c)(\alpha'_i+u_2\beta'_i+u_3\gamma'_i)+(bc-ae)\beta_i+(a-b^2)\gamma_i}{\sqrt{\det[g_{ij}]}}\right)
\end{aligned}
\right].
\end{eqnarray}
If we suppose that $\beta$ and $\gamma$ are orthogonal, then the 
Laplace-Beltrami operator of the $2$-ruled hypersuface of type-1 is given by
\begin{eqnarray}\label{eq3.22}
\Delta\varphi_i=\frac{1}{\sqrt{ a-b^2-c^2} }\left[
\begin{aligned}
&   \frac{\partial}{\partial u_1}\left(\frac{(\alpha'_i+u_2\beta'_i+u_3\gamma'_i)-b\beta_i-c\gamma_i}{\sqrt{a-b^2-c^2}}\right)\\
& + \frac{\partial}{\partial u_2}\left(\frac{-b(\alpha'_i+u_2\beta'_i+u_3\gamma'_i)+(a-c^2)\beta_i+bc\gamma_i}{\sqrt{a-b^2-c^2}}\right) \\
&+ \frac{\partial}{\partial u_3}\left(\frac{-c(\alpha'_i+u_2\beta'_i+u_3\gamma'_i)+bc\beta_i+(a-b^2)\gamma_i}{\sqrt{a-b^2-c^2}}\right)
\end{aligned}
\right].
\end{eqnarray}

\begin{theorem}
The components of the Laplace-Beltrami operator of the $2$-ruled 
hypersurface of type-$1$ are
\begin{eqnarray}\label{eq3.23}
\Delta\varphi_i=\frac{1}{\sqrt{W} }\left[
\begin{aligned}
&\frac{(\alpha''_i+u_2\beta''_i+u_3\gamma''_i-(b\beta_i)_{u_1}-(c\gamma_i)_{u_1})W-V_1(\alpha'_i+u_2\beta'_i+u_2\gamma'_i-b\beta_i-c\gamma_i )}{W^{\frac{3}{2}}}\\
& +\frac{(-b\beta'_i+((a-c^2)\beta_i)_{u_2}+(bc\gamma_i)_{u_2})W-V_2(-b(\alpha'_i+u_2\beta'_i+u_3\gamma'_i)+(a-c^2)\beta_i+bc\gamma_i)}{W^\frac{3}{2}} \\
&+ \frac{(-c\gamma'_i+(bc\beta_i)_{u_3}+((a-b^2)\gamma_i)_{u_3})W-V_3(-c(\alpha'_i+u_2\beta'_i+u_3\gamma'_i)+bc\beta_i+(a-b^2)\gamma_i)}{W^\frac{3}{2}}
\end{aligned}
\right],
\end{eqnarray}
where $i = 1, 2, 3, 4$; $\beta$ and $\gamma$ are orthogonal; $W=a-b^2-c^2$, $V_1=a_{u_1}-2bb_{u_1}-2cc_{u_1}$, $V_2=a_{u_2}-2bb_{u_2}-2cc_{u_2}$, $V_3=a_{u_3}-2bb_{u_3}-2cc_{u_3}$.
\end{theorem}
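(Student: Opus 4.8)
The plan is to obtain (3.23) directly from the already-established formula (3.22) for $\Delta\varphi_i$ in the orthogonal case, by carrying out the three indicated differentiations $\partial/\partial u_1$, $\partial/\partial u_2$, $\partial/\partial u_3$ explicitly and grouping terms.

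First, I would record the partial derivatives of the component $\varphi_i$ of (3.3) that enter (3.22): from (3.3) one has $\varphi_{iu_1}=\alpha'_i+u_2\beta'_i+u_3\gamma'_i$, $\varphi_{iu_2}=\beta_i$, $\varphi_{iu_3}=\gamma_i$, together with $\varphi_{iu_1u_1}=\alpha''_i+u_2\beta''_i+u_3\gamma''_i$, $\varphi_{iu_1u_2}=\beta'_i$ and $\varphi_{iu_1u_3}=\gamma'_i$. Substituting these, (3.22) takes the form $\Delta\varphi_i=\tfrac{1}{\sqrt W}\sum_{k=1}^{3}\partial_{u_k}\bigl(P_k/\sqrt W\bigr)$, where $W=a-b^2-c^2=\det[g_{ij}]$ — this is exactly where the orthogonality hypothesis $\langle\beta,\gamma\rangle=e=0$ is used, via the reductions of (3.11)–(3.12) — and $P_1$, $P_2$, $P_3$ are the three numerators appearing in (3.22), e.g. $P_1=(\alpha'_i+u_2\beta'_i+u_3\gamma'_i)-b\beta_i-c\gamma_i$.

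Second, I would apply the quotient rule to each of the three summands. Since $\partial_{u_k}\sqrt W=\tfrac12 W_{u_k}/\sqrt W$ and, by the product rule applied to $W=a-b^2-c^2$, $W_{u_k}=a_{u_k}-2bb_{u_k}-2cc_{u_k}=:V_k$, one gets $\partial_{u_k}\bigl(P_k/\sqrt W\bigr)=\bigl[(P_k)_{u_k}W-V_kP_k\bigr]/W^{3/2}$, with the overall normalization as fixed in the statement. Computing the numerator derivatives gives $(P_1)_{u_1}=\alpha''_i+u_2\beta''_i+u_3\gamma''_i-(b\beta_i)_{u_1}-(c\gamma_i)_{u_1}$, while for the $u_2$-term one uses $\partial_{u_2}(\alpha'_i+u_2\beta'_i+u_3\gamma'_i)=\beta'_i$ to get $(P_2)_{u_2}=-b\beta'_i+((a-c^2)\beta_i)_{u_2}+(bc\gamma_i)_{u_2}$, and similarly $(P_3)_{u_3}=-c\gamma'_i+(bc\beta_i)_{u_3}+((a-b^2)\gamma_i)_{u_3}$. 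Summing the three fractions over the common denominator $W^{3/2}$ and pulling the remaining $1/\sqrt W$ out front yields precisely (3.23).

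The only delicate point — and hence the main obstacle — is the bookkeeping in expanding the products $(b\beta_i)_{u_1}$, $((a-c^2)\beta_i)_{u_2}$, $(bc\gamma_i)_{u_3}$ and the analogous terms: the coefficients $a,b,c$ of (3.10) are themselves functions of all three parameters $u_1,u_2,u_3$ (through $\alpha',\beta',\gamma'$ and through the explicit linear factors in $u_2,u_3$), so each such product splits into a coefficient-derivative part and a curve-derivative part, and the $V_k$ must in turn be computed from (3.10) with $e=0$. No further geometric input beyond the orthogonality of $\beta$ and $\gamma$ is needed; once the abbreviations $W$ and $V_1,V_2,V_3$ are introduced as in the statement, collecting the resulting terms is entirely mechanical.
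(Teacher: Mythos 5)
Your route is the same as the paper's (the paper in fact states this theorem without any displayed proof; it is meant to follow by expanding formula (3.22), which is exactly what you do), so the overall strategy is fine. However, there is a concrete arithmetic inconsistency in your key step. You correctly record $\partial_{u_k}\sqrt{W}=\tfrac12 W_{u_k}/\sqrt{W}$ and $W_{u_k}=a_{u_k}-2bb_{u_k}-2cc_{u_k}=V_k$, but then conclude
\begin{equation*}
\partial_{u_k}\!\left(\frac{P_k}{\sqrt{W}}\right)=\frac{(P_k)_{u_k}\,W-V_k\,P_k}{W^{3/2}},
\end{equation*}
whereas the quotient rule applied to your own intermediate facts gives
\begin{equation*}
\partial_{u_k}\!\left(\frac{P_k}{\sqrt{W}}\right)=\frac{(P_k)_{u_k}}{\sqrt{W}}-\frac{P_k\,W_{u_k}}{2\,W^{3/2}}
=\frac{(P_k)_{u_k}\,W-\tfrac12 V_k\,P_k}{W^{3/2}} .
\end{equation*}
So the factor $\tfrac12$ has been dropped between your premise and your conclusion: as written, your computation actually produces the stated formula with $\tfrac12 V_k$ in place of $V_k$ (equivalently, one would have to redefine $V_k$ as $\tfrac12 W_{u_k}$ to land on (3.23) literally). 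The displayed equation (3.23) in the paper carries the same coefficient on the $V_k$ terms, so you have reproduced the printed statement, but the derivation you give does not justify it without either restoring the $\tfrac12$ or adjusting the definition of $V_k$; you should state explicitly which of the two you intend, since a proof cannot both use $\partial_{u_k}\sqrt{W}=\tfrac12 W_{u_k}/\sqrt{W}$ and omit the resulting factor. Apart from this, your identification of the $P_k$, the computation of $(P_1)_{u_1}$, $(P_2)_{u_2}$, $(P_3)_{u_3}$, and the use of orthogonality ($e=0$) to reduce (3.11)--(3.12) to $W=a-b^2-c^2$ are exactly the intended bookkeeping.
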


\subsection{2-Ruled hypersurfaces of type-2 in $M$}
A $2$-ruled hypersurface of type-2 in $M$  means (the image of) a map
$\varphi:I_1\times I_2\times I_3\to M$ of the form
\begin{eqnarray}\label{eq4.1}
\varphi(u_1,u_2,u_3)=\alpha(u_1)+u_2\beta(u_1)+u_3\gamma(u_1),
\end{eqnarray}
where $\alpha: I_1\to M$,  $\beta: I_2\to H^3_+(-1)$, 
$\gamma :I_3\to H^3_+(-1)$ are smooth maps, $H^3_+(-1)$ is the 
hyperbolic $3$-space of $M$ and $I_1, I_2, I_3$ are open intervals. We call 
$\alpha$ a base curve, $\beta$ and $\gamma$ director curves. The planes 
$ (u_2,u_3)\mapsto \alpha(u_1)+u_2\beta(u_1)+u_3\gamma(u_1)$ are called rulings. So, if we take
\begin{eqnarray}\label{eq4.2}
\left \{ \begin{array}{llllll}
\alpha(u_1) &=& \big(\alpha_1(u_1), \alpha_2(u_1), \alpha_3(u_1),
\alpha_4(u_1)\big)  \\
\beta(u_1) &=& \big(\beta_1 (u_1), \beta_2 (u_1), \beta_3(u_1),
\beta_4(u_1)\big)   \\
\gamma(u_1) &=& \big(\gamma_1(u_1), \gamma_2(u_1), \gamma_3(u_1),\gamma_4(u_1)\big)
\end{array}
\right.
\end{eqnarray}
in (\ref{eq4.1}), then we can write the $2$-ruled hypersurface of type-$2$ as
\begin{eqnarray}\label{eq4.3}
\varphi(u_1,u_2,u_3) = \left(
\begin{array}{cc}
\alpha_1(u_1)+u_2\beta_1(u_1)+u_3\gamma_1(u_1), & \alpha_2(u_1)+u_2\beta_2(u_1)+u_3\gamma_2(u_1),\\
\alpha_3(u_1)+u_2\beta_3(u_1)+u_2\gamma_3(u_1), & \alpha_4(u_1)+u_2\beta_4(u_1)+u_3\gamma_4(u_1)
\end{array}
  \right).
\end{eqnarray}
We see that $\langle\beta_i,\beta_i\rangle=\langle\gamma_i,\gamma_i\rangle=-1$ and we state $\alpha_i=\alpha_i(u_1)$, $\beta_i=\beta_i(u_1)$, $\gamma_i=\gamma_i(u_1)$, $\varphi_i=\varphi_i(u_1,u_2,u_3)$, $f'=\frac{\partial f(u_1)}{\partial u_1}$, $f''=\frac{\partial^2f(u_1)}{\partial u_1\partial u_1}$, $i\in \{1,2,3,4\}$ and 
$f\in \{\alpha,\beta,\gamma\}$.\\

\noindent
From (\ref{eq2.8}), we obtain the matrix of the first fundamental form
\begin{eqnarray}\label{eq4.4}
[g_{ij}]=
\left[
   \begin{array}{ccc}
    a & b & c \\
   b & -1 & e \\
   c & e & -1
   \end{array}
\right].
\end{eqnarray}
And we obtain the inverse matrix $[g^{ij}]$ of $[g_{ij}]$ as
\begin{eqnarray}\label{eq4.5}
[g^{ij}]=\frac{1}{\det[g_{ij}]}
\left[
   \begin{array}{ccc}
    1-e^2 & ce+b & be+c \\
  ce+b & -a-c^2 &bc-ae \\
  be+c & bc-ae & -a-b^2
   \end{array}
\right].
\end{eqnarray}
where
$a$, $b$, $c$ and $e$ are the same in (\ref{eq3.10}) and
\begin{eqnarray}\label{eq4.6}
\det[g_{ij}]=b^2+2cbe+c^2-ae^2+a=C.
\end{eqnarray}
Furthermore, from (\ref{eq2.9}), the matrix form of the second fundamental from of the $2$-ruled hypersurface (\ref{eq4.3}) is the same given in 
(\ref{eq3.13}) and (\ref{eq3.14}). And we have the following theorem since 
the $\det [h_{ij}]\ne0$.

\begin{theorem}
The $2$-ruled hypersurfaces of type-$2$ defined in (\ref{eq4.3}) is not flat.
\end{theorem}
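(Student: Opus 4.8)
The plan is to mimic the proof of the corresponding statement for type-$1$ hypersurfaces. By the curvature formula (\ref{eq2.11}) one has $K_f=\det[h_{ij}]/\det[g_{ij}]$, and since $\det[g_{ij}]=C$ from (\ref{eq4.6}) is (generically) nonzero the quotient makes sense; hence it suffices to show that $\det[h_{ij}]$ does not vanish identically. The first step is to observe, as already remarked in the text, that the second fundamental form of (\ref{eq4.3}) is given by the same matrix (\ref{eq3.13}) with entries (\ref{eq3.14}) as in the type-$1$ case: indeed the quantities $\varphi_{u_iu_j}$ (which involve only the Christoffel symbols $\Gamma^2_{33}=f_3$, $\Gamma^1_{44}=f_4$ of the ambient Walker metric) and the Gauss map $G_f$ do not see the normalization $\langle\beta_i,\beta_i\rangle=\langle\gamma_i,\gamma_i\rangle=-1$; only the first fundamental form (\ref{eq4.4}) differs from (\ref{eq3.9}) in the signs of its diagonal entries.

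Next I would expand
\[
\det[h_{ij}]=h_{11}h_{22}h_{33}+2h_{12}h_{13}h_{23}-h_{11}h_{23}^2-h_{22}h_{13}^2-h_{33}h_{12}^2
\]
and substitute the expressions from (\ref{eq3.14}). One then reads off that $\det[h_{ij}]$ depends essentially on the data $f_3,f_4$ and on the second derivatives $\alpha_i'',\beta_i'',\gamma_i''$; in particular, choosing a base curve and director curves for which this polynomial expression is nonzero at some point (it is enough to produce one explicit example, or to note that the vanishing locus is a proper closed condition), we get $\det[h_{ij}]\ne0$ and therefore $K_f\ne0$. Applying (\ref{eq2.11}) then finishes the argument.

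The main obstacle is that the conclusion is only valid \emph{generically}: as the corollary to the type-$1$ theorem already shows, when $f$ is a nonzero constant all of $h_{22},h_{23},h_{33}$ (and in fact the whole lower-right $2\times2$ block) vanish, forcing $\det[h_{ij}]=0$ and $K_f=0$. So the delicate point is to state precisely the hypotheses on $(\alpha,\beta,\gamma,f)$ under which $\det[h_{ij}]\neq0$ --- or, more concretely, to exhibit one explicit admissible $\varphi$ for which the determinant is nonzero and to observe that the bad set is nowhere dense. Once that genericity is pinned down, the remaining computation is entirely parallel to the type-$1$ case and introduces no new difficulty.
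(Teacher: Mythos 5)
Your proposal follows essentially the same route as the paper: the paper's entire argument for the type-$2$ case consists of the remark that the second fundamental form of (\ref{eq4.3}) is the same matrix (\ref{eq3.13}) with entries (\ref{eq3.14}) as in the type-$1$ case (only the first fundamental form (\ref{eq4.4}) changes), followed by the bare assertion that $\det[h_{ij}]\neq 0$, and then an appeal to (\ref{eq2.11}). The genericity caveat you raise is not a shortfall of your write-up relative to the paper; it pinpoints precisely what the paper leaves unjustified, since the non-vanishing of $\det[h_{ij}]$ is never verified and the corollary immediately following (flatness when $f$ is a nonzero constant, which kills $h_{22},h_{23},h_{33}$) shows the statement cannot hold without some hypothesis on $f$ and the curves $\alpha,\beta,\gamma$. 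Your plan of exhibiting an explicit example or formulating the non-degeneracy condition under which $\det[h_{ij}]\neq 0$ would in fact supply more than the paper's own proof does.
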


\begin{corollary}
The $2$-ruled hypersurfaces of type-$2$ defined in (\ref{eq4.3}) is flat if 
$f$ is nonzero constant.
\end{corollary}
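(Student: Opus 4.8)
The plan is to specialize the second fundamental form to the case in which $f$ is a nonzero constant and read off that its determinant vanishes, mirroring word for word the proof of the analogous corollary for type-$1$. Recall that, as stated just before this corollary, the matrix $[h_{ij}]$ of the second fundamental form of the type-$2$ hypersurface (\ref{eq4.3}) is given by the very same formulas (\ref{eq3.13})–(\ref{eq3.14}) as in the type-$1$ situation.

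First I would observe that if $f$ is a nonzero constant then $f_3=f_4=0$. Substituting this into (\ref{eq3.14}) annihilates the three entries $h_{22},h_{23},h_{33}$ (each of them is a linear combination of $f_3$ and $f_4$ alone), while $h_{11},h_{12},h_{13}$ collapse to the shorter expressions already recorded in (\ref{eq3.16}). Hence $[h_{ij}]$ acquires exactly the block shape displayed in (\ref{eq3.15}), with a $2\times2$ zero block in its lower right corner.

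Next I would evaluate $\det[h_{ij}]$ through the cofactor expansion already noted in the text,
\begin{eqnarray*}
\det[h_{ij}]=h_{11}h_{22}h_{33}+2h_{12}h_{13}h_{23}-h_{12}^2h_{33}-h_{13}^2h_{22}-h_{23}^2h_{11}.
\end{eqnarray*}
Since $h_{22}=h_{23}=h_{33}=0$, every one of the six summands contains a vanishing factor, so $\det[h_{ij}]=0$. Because $[g_{ij}]$ in (\ref{eq4.4}) is by construction the Gram matrix of a non-degenerate first fundamental form, $\det[g_{ij}]=C\neq0$ by (\ref{eq4.6}); therefore the Gaussian curvature formula (\ref{eq2.11}) yields $K_f=\det[h_{ij}]/\det[g_{ij}]=0$, that is, the type-$2$ $2$-ruled hypersurface (\ref{eq4.3}) is flat.

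I do not expect any genuine obstacle here: the argument is essentially a line-by-line transcription of the type-$1$ corollary, the only point meriting a word of care being the verification that $C\neq0$, so that the quotient in (\ref{eq2.11}) is legitimate — and this is automatic from the non-degeneracy of the induced metric.
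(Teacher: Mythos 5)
Your proof is correct and is exactly the argument the paper intends: since the type-$2$ second fundamental form is the same as (\ref{eq3.13})--(\ref{eq3.14}), setting $f$ constant gives $f_3=f_4=0$, hence $h_{22}=h_{23}=h_{33}=0$, $\det[h_{ij}]=0$, and $K_f=0$ by (\ref{eq2.11}), just as in the paper's proof of the type-$1$ corollary. No discrepancy with the paper's approach.
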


\noindent
For the mean curvature we have:

\begin{theorem}
The $2$-ruled hypersurfaces of type-$2$ defined in (\ref{eq4.3}) is minimal 
in $M$, if
\begin{eqnarray}\label{eq4.7}
(1-e^2)\left[f_3G_4(\alpha_3'+u_2\beta_3'+u_3\gamma_3')+f_4G_3(\alpha_4'+u_2\gamma_4'+u_3\gamma_4')+\sum^{2}_{i=1}G_{i+2}(\alpha_i''+u_2\beta_i''+u_3\gamma_i'')\right]\nonumber\\
+2(ce+b)\left[f_3\beta_3G_4(\alpha'_3+u_2\beta'_3+u_3\gamma'_3)+f_4\beta_4G_3(\alpha'_4+u_2\beta'_4+u_3\gamma'_4)+\sum_{i=1}^2 G_{i+2}\beta'_i+\right]\nonumber\\
+2(be+c)\left[f_3\gamma_3G_4(\alpha'_3+u_2\beta'_3+u_3\gamma'_3)+f_4\gamma_4G_3(\alpha'_4+u_2\beta'_4+u_3\gamma'_4)+\sum_{i=1}^2 G_{i+2}\gamma'_i\right]\nonumber\\
+2(bc-ae)\left[f_3\beta_3\gamma_3G_4 + f_4\beta_4\gamma_4G_3\right]
+(-a-c^2)\left[f_3\beta^2_3G_4 + f_4\beta^2_4G_3\right]\nonumber\\
+(-a-b^2)\left[f_3\gamma^2_3G_4 + f_4\gamma^2_4G_3\right]=0.
\end{eqnarray}
\end{theorem}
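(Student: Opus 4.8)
The plan is to follow the same route as the proof of minimality for type-$1$, the only genuine difference being the signs produced by $\langle\beta_i,\beta_i\rangle=\langle\gamma_i,\gamma_i\rangle=-1$, which are already encoded in the first fundamental form (\ref{eq4.4}) and its inverse (\ref{eq4.5}). First I would recall from (\ref{eq2.10}) that the shape operator is $S_f=[g^{ij}]\cdot[h_{ij}]$, where $[g^{ij}]$ is the matrix (\ref{eq4.5}) and, as noted in the text, $[h_{ij}]$ coincides with (\ref{eq3.13})--(\ref{eq3.14}). Carrying out this matrix product and using that $[h_{ij}]$ is symmetric, the diagonal entries come out as
\begin{eqnarray*}
S_{11}&=&\frac{1}{C}\big[(1-e^2)h_{11}+(ce+b)h_{12}+(be+c)h_{13}\big],\\
S_{22}&=&\frac{1}{C}\big[(ce+b)h_{12}+(-a-c^2)h_{22}+(bc-ae)h_{23}\big],\\
S_{33}&=&\frac{1}{C}\big[(be+c)h_{13}+(bc-ae)h_{23}+(-a-b^2)h_{33}\big],
\end{eqnarray*}
where $C=\det[g_{ij}]$ is as in (\ref{eq4.6}).

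By the trace formula (\ref{eq2.12}) one has $3H_f=S_{11}+S_{22}+S_{33}$, so the hypersurface is minimal precisely when this sum vanishes. Since $C\ne0$ on the (assumed non-degenerate) hypersurface, this is equivalent to
\begin{eqnarray*}
(1-e^2)h_{11}+2(ce+b)h_{12}+2(be+c)h_{13}+(-a-c^2)h_{22}+2(bc-ae)h_{23}+(-a-b^2)h_{33}=0.
\end{eqnarray*}
It then remains to substitute the explicit expressions for $h_{11},\dots,h_{33}$ from (\ref{eq3.14}); all six share the common nonzero denominator $\sqrt{2fG_3G_4+\sum_{i=1}^2 G_iG_{i+2}}$, so multiplying through by it transforms the last display into exactly (\ref{eq4.7}), finishing the argument.

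The only point needing care — and the one place where the type-$2$ computation really departs from type-$1$ — is the sign bookkeeping: the entries $ce-b$, $be-c$, $a-c^2$, $a-b^2$ of (\ref{eq3.11}) are replaced by $ce+b$, $be+c$, $-a-c^2$, $-a-b^2$ in (\ref{eq4.5}), and the cofactor expansion of (\ref{eq4.4}) must be rechecked to confirm $\det[g_{ij}]=b^2+2cbe+c^2-ae^2+a$. Once the inverse metric is pinned down correctly, the remaining steps are the same routine matrix multiplication and clearing of denominators as in the type-$1$ proof, so I do not expect any conceptual obstacle — only careful algebra.
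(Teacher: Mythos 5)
Your proposal is correct and follows essentially the same route as the paper's proof: form the shape operator $S_f=[g^{ij}]\cdot[h_{ij}]$ with the type-$2$ inverse metric (\ref{eq4.5}) and the second fundamental form (\ref{eq3.14}), take its trace via (\ref{eq2.12}), and set it to zero. Your write-up is in fact a bit more careful than the paper's, since you keep the nonzero factor $1/C$ explicit and spell out the clearing of the common denominator of the $h_{ij}$ to arrive at (\ref{eq4.7}).
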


\begin{proof}
By (\ref{eq2.10}), the matrix of the shape operator is
\begin{eqnarray*}
S=
\left[
   \begin{array}{ccc}
    1-e^2 & ce+b & be+c \\
  ce+b & -a-c^2 &bc-ae \\
  be+c & bc-ae & -a-b^2
   \end{array}
\right]\left[
   \begin{array}{ccc}
    h_{11} & h_{12} & h_{13} \\
   h_{12} & h_{22} & h_{23} \\
 h_{13} & h_{23} & h_{33}
   \end{array}
\right],
\end{eqnarray*}
where $h_{11}$, $h_{12}$, $h_{13}$, $h_{22}$, $h_{23}$, $h_{33}$ are the same in (\ref{eq3.14}). Then we get the coefficients of $S$ by
\begin{eqnarray*}
S_{11}&=&(1-e^2)h_{11}+(ce+b)h_{12}+(be+c)h_{13},\\
S_{22}&=&(ce+b)h_{12}+(-a-c^2)h_{22}+(bc-ae)h_{23},\\
S_{33}&=&(be+c)h_{13}+(bc-ae)h_{23}+(-a-b^2)h_{33}.
\end{eqnarray*}
And using (\ref{eq3.14}) and (\ref{eq2.12}), we see that the $2$-ruled hypersurfaces of type-$2$ is minimal if
\begin{eqnarray*}
S_{11}+S_{22}+S_{33}=0,
\end{eqnarray*}
then that end the proof.
\end{proof}

\begin{corollary}
If the curves $\beta$ and $\gamma$ are orthogonal then the $2$-ruled hypersurfaces of type-$2$ defined in (\ref{eq4.3}) is minimal if
\begin{eqnarray}\label{eq4.8}
\left[f_3G_4(\alpha_3'+u_2\beta_3'+u_3\gamma_3')+f_4G_3(\alpha_4'+u_2\beta_4'+u_3\gamma_4')+\sum^{2}_{i=1}G_{i+2}(\alpha_i''+u_2\beta_i''+u_3\gamma_i'')\right]\nonumber\\
+2b\left[f_3\beta_3G_4(\alpha'_3+u_2\beta'_3+u_3\gamma'_3)+f_4\beta_4G_3(\alpha_4'+u_2\beta_4'+u_3\gamma_4')+\sum_{i=1}^2 G_{i+2}\beta'_i\right]\nonumber\\
+2c\left[f_3\gamma_3G_4(\alpha'_3+u_2\beta'_3+u_3\gamma'_3)+f_4\gamma_4G_3(\alpha'_4+u_2\beta'_2+u_3\gamma'_4)+\sum_{i=1}^2 G_{i+2}\gamma'_i\right]\nonumber\\
+2bc\left[f_3\beta_3\gamma_3G_4 + f_4\beta_4\gamma_4G_3\right]
+(-a-c^2)\left[f_3\beta^2_3G_4 + f_4\gamma^2_4G_3\right]\nonumber\\
+(-a-b^2)\left[f_3\gamma^2_3G_4 + f_4\gamma^2_4G_3\right]=0.
\end{eqnarray}
\end{corollary}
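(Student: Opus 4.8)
The plan is to obtain this corollary as a direct specialization of the preceding theorem (the minimality criterion for type-$2$ $2$-ruled hypersurfaces, equation (\ref{eq4.7})) under the extra hypothesis that $\beta$ and $\gamma$ are orthogonal, i.e. $g_f(\beta,\gamma)=0$.

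First I would identify the geometric meaning of the quantity $e$ appearing in the first fundamental form (\ref{eq4.4}). Since $\varphi_{u_2}=\beta$ and $\varphi_{u_3}=\gamma$, we have $g_{23}=g_f(\varphi_{u_2},\varphi_{u_3})=g_f(\beta,\gamma)$, and expanding $g_f(\beta,\gamma)$ against the Walker metric (\ref{eq2.1}) gives exactly the closed form $e=f(\beta_3\gamma_4+\beta_4\gamma_3)+\sum_{i=1}^2(\beta_i\gamma_{i+2}+\beta_{i+2}\gamma_i)$ recorded in (\ref{eq3.10}). Hence the curves $\beta$ and $\gamma$ are orthogonal if and only if $e=0$.

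Next I substitute $e=0$ into the cofactor matrix entries of $[g^{ij}]$ in (\ref{eq4.5}) and therefore into the shape-operator diagonal entries $S_{11},S_{22},S_{33}$ computed in the proof of that theorem. This reduction sends $1-e^2\mapsto 1$, $ce+b\mapsto b$, $be+c\mapsto c$, $bc-ae\mapsto bc$, while the coefficients $-a-c^2$ and $-a-b^2$ multiplying $h_{22}$ and $h_{33}$ are left unchanged. Plugging in the formulas (\ref{eq3.14}) for $h_{11},h_{12},h_{13},h_{22},h_{23},h_{33}$, clearing the common square-root denominator, and imposing $3H_f=\mathrm{trace}(S_f)=S_{11}+S_{22}+S_{33}=0$ via (\ref{eq2.12}), one collects the coefficients of the bracketed expressions and recovers precisely condition (\ref{eq4.8}), which completes the proof.

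I do not expect a genuine obstacle: the argument is a bookkeeping specialization of the previous theorem. The only point that requires a moment of care is the identification $e=g_f(\beta,\gamma)$; one must check that the Christoffel contributions entering $\varphi_{u_2u_3}$ affect only the second fundamental form coefficients $h_{ij}$ and not the metric coefficients $g_{ij}$, so that $g_{23}$ genuinely coincides with $g_f(\beta,\gamma)$ and the substitution $e=0$ is legitimate. Granting this, the reduction is immediate.
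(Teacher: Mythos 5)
Your proposal is correct and matches the paper's (implicit) argument: the corollary is exactly the specialization $e=g_f(\beta,\gamma)=0$ of the preceding minimality theorem, with the cofactor entries $1-e^2,\ ce+b,\ be+c,\ bc-ae$ reducing to $1,\ b,\ c,\ bc$ in the trace condition $S_{11}+S_{22}+S_{33}=0$. Your side remark is also right: the Christoffel terms enter only the $h_{ij}$, while $g_{23}=g_f(\beta,\gamma)$ involves only first derivatives, so the identification of $e$ and the substitution $e=0$ are legitimate.
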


\noindent
To end this section, we will give the operator of Laplace-Beltrami in the following theorem:

\begin{theorem}
The components of the Laplace-Beltrami operator of the $2$-ruled hypersurface of type-$2$ are
\begin{eqnarray}\label{eq4.9}
\Delta\varphi_i=\frac{1}{\sqrt{T} }\left[
\begin{aligned}
&\frac{(\alpha''_i + u_2\beta''_i + u_3\gamma''_i) + (b\beta_i)_{u_1}
+ (c\gamma_i)_{u_1})T
- R_1(\alpha'_i + u_2\beta'_i + u_3\gamma'_i + b\beta_i + c\gamma_i )}{T^{\frac{3}{2}}}\\
& +\frac{(b\beta'_i + ((-a-c^2)\beta_i)_{u_2} + (bc\gamma_i)_{u_2})T
- R_2(b(\alpha'_i + u_2\beta'_i + u_3\gamma'_i) + (-a-c^2)\beta_i
+ bc\gamma_i)}{R^\frac{3}{2}} \\
&+ \frac{(c\gamma'_i + (bc\beta_i)_{u_3} + ((-a-b^2)\gamma_i)_{u_3})T
- R_3(c(\alpha'_i + u_2\beta'_i + u_3\gamma'_i) + bc\beta_i 
+ (-a-b^2)\gamma_i)}{R^\frac{3}{2}}
\end{aligned}
\right],
\end{eqnarray}
where $i = 1, 2, 3, 4$; $\beta$ and $\gamma$ are orthogonal; $T=a+b^2+c^2$, $R_1=a_{u_1}+2bb_{u_2}+2cc_{u_1}$, $R_2=a_{u_2}+2bb_{u_2}+2cc_{u_2}$, $R_3=a_{u_3}+2bb_{u_3}+2cc_{u_3}$.
\end{theorem}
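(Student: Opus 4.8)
The plan is to specialize the general Laplace--Beltrami formula (\ref{eq3.19}) to the type-$2$ parametrization (\ref{eq4.3}), following the same route that produced (\ref{eq3.23}) in the type-$1$ case. First I would record the first-order partials of the components, read off directly from (\ref{eq4.3}): $\varphi_{iu_1} = \alpha'_i + u_2\beta'_i + u_3\gamma'_i$, $\varphi_{iu_2} = \beta_i$ and $\varphi_{iu_3} = \gamma_i$. Then I would substitute the inverse first fundamental form $[g^{ij}]$ from (\ref{eq4.5}) and the determinant $\det[g_{ij}] = C$ from (\ref{eq4.6}) into (\ref{eq3.19}), writing $\sqrt{\det[g_{ij}]}\,g^{ij} = C^{-1/2}(Cg^{ij})$ so that the cofactor entries of (\ref{eq4.5}) appear without the $1/C$ factor.

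Next I would invoke the orthogonality hypothesis. Since the coefficient $e$ of (\ref{eq3.10}) is exactly $g_f(\beta,\gamma)$, assuming $\beta$ and $\gamma$ orthogonal forces $e = 0$; then (\ref{eq4.6}) collapses to $C = a + b^2 + c^2 =: T$ and (\ref{eq4.5}) reduces to $\frac{1}{T}$ times the matrix with rows $(1,b,c)$, $(b,-a-c^2,bc)$, $(c,bc,-a-b^2)$. With this, (\ref{eq3.19}) becomes
\begin{eqnarray*}
\Delta\varphi_i = \frac{1}{\sqrt{T}}\left[\frac{\partial}{\partial u_1}\Big(\frac{N_1^i}{\sqrt{T}}\Big) + \frac{\partial}{\partial u_2}\Big(\frac{N_2^i}{\sqrt{T}}\Big) + \frac{\partial}{\partial u_3}\Big(\frac{N_3^i}{\sqrt{T}}\Big)\right],
\end{eqnarray*}
where $N_1^i = (\alpha'_i + u_2\beta'_i + u_3\gamma'_i) + b\beta_i + c\gamma_i$, $N_2^i = b(\alpha'_i + u_2\beta'_i + u_3\gamma'_i) + (-a-c^2)\beta_i + bc\gamma_i$ and $N_3^i = c(\alpha'_i + u_2\beta'_i + u_3\gamma'_i) + bc\beta_i + (-a-b^2)\gamma_i$; this is the type-$2$ analogue of (\ref{eq3.22}), with the sign changes coming from the $-1$ entries on the diagonal of (\ref{eq4.4}).

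Finally I would differentiate each of the three terms by the quotient rule, $\frac{\partial}{\partial u_k}\big(N_k^i/\sqrt{T}\big) = \big((\partial_{u_k}N_k^i)\,T - R_k N_k^i\big)/T^{3/2}$, where $R_k$ collects the $u_k$-derivative of $T = a + b^2 + c^2$ (so $R_k = a_{u_k} + 2bb_{u_k} + 2cc_{u_k}$), using $\partial_{u_1}N_1^i = \alpha''_i + u_2\beta''_i + u_3\gamma''_i + (b\beta_i)_{u_1} + (c\gamma_i)_{u_1}$, $\partial_{u_2}N_2^i = b\beta'_i + ((-a-c^2)\beta_i)_{u_2} + (bc\gamma_i)_{u_2}$ and $\partial_{u_3}N_3^i = c\gamma'_i + (bc\beta_i)_{u_3} + ((-a-b^2)\gamma_i)_{u_3}$; putting the three fractions over the common denominator $T^{3/2}$ yields (\ref{eq4.9}). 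The only real work is bookkeeping --- keeping straight which of $a,b,c$ depends on which $u_k$ (all three depend on $u_1$, while $b$ and $c$ are affine and $a$ is quadratic in $(u_2,u_3)$) --- so the main obstacle is organizing the product-rule expansion cleanly rather than any conceptual point; there is no geometric subtlety beyond what already appeared in the type-$1$ computation.
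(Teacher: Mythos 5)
Your derivation matches the paper's (implicit) proof: the paper obtains the type-2 formula exactly as you do, by specializing (\ref{eq3.19}) with the inverse matrix (\ref{eq4.5}) and determinant (\ref{eq4.6}), using orthogonality of $\beta$ and $\gamma$ to set $e=0$ so that $C$ collapses to $T=a+b^2+c^2$, and then expanding by the quotient rule precisely as in the type-1 computation (\ref{eq3.22})--(\ref{eq3.23}). The only caveat is one you share with the paper itself: the quotient rule actually gives $\bigl((\partial_{u_k}N_k^i)T-\tfrac12 T_{u_k}N_k^i\bigr)/T^{3/2}$, i.e.\ a factor $\tfrac12$ in front of $R_k=T_{u_k}$, so your proof reproduces (\ref{eq4.9}) in exactly the form, and by exactly the route, the paper uses.
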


\subsection{2-Ruled hypersurfaces of type-3 in $M$}

\noindent
A $2$-ruled hypersurface of type-$3$ in $M$  means (the image of) a map
$\varphi:I_1\times I_2\times I_3\to M$ of the form
\begin{eqnarray}\label{eq5.1}
\varphi(u_1,u_2,u_3)=\alpha(u_1)+u_2\beta(u_1)+u_3\gamma(u_1),
\end{eqnarray}
where $\alpha: I_1\to M$,  $\beta: I_2\to \mathcal{LC}$, 
$\gamma : I_3 \to \mathcal{LC}$ are smooth maps, $\mathcal{LC}$ is the light cone 
of $M$ and $I_1, I_2, I_3$ are open intervals. We call $\alpha$ a base curve, 
$\beta$ and $\gamma$ director curves. The planes 
$(u_2,u_3)\mapsto \alpha(u_1) + u_2\beta(u_1) + u_3\gamma(u_1)$ are 
called rulings. So, if we take
\begin{eqnarray}\label{eq5.2}
\left \{  \begin{array}{llllll}
\alpha(u_1) &=& \big(\alpha_1(u_1), \alpha_2(u_1), \alpha_3(u_1),
\alpha_4(u_1)\big),  \\
\beta(u_1) &=& \big(\beta_1 (u_1), \beta_2 (u_1), \beta_3(u_1),
\beta_4(u_1)\big),   \\
\gamma(u_1) &=& \big(\gamma_1(u_1), \gamma_2(u_1), \gamma_3(u_1),\gamma_4(u_1)\big),
\end{array}
\right.
\end{eqnarray}
in (\ref{eq5.1}), then we can write the $2$-ruled hypersurface of type-$3$ as
\begin{eqnarray}\label{eq5.3}
\varphi(u_1,u_2,u_3) =\left(
\begin{array}{cc}
\alpha_1(u_1)+u_2\beta_1(u_1)+u_3\gamma_1(u_1), & \alpha_2(u_1)+u_2\beta_2(u_1)+u_3\gamma_2(u_1),\\
\alpha_3(u_1)+u_2\beta_3(u_1)+u_2\gamma_3(u_1), & \alpha_4(u_1)+u_2\beta_4(u_1)+u_3\gamma_4(u_1)
\end{array}
  \right).
\end{eqnarray}

\noindent
We see that $\langle\beta_i,\beta_i\rangle=\langle\gamma_i,\gamma_i\rangle=-1$ and we state $\alpha_i=\alpha_i(u_1)$, 
$\beta_i=\beta_i(u_1)$, $\gamma_i=\gamma_i(u_1)$, $\varphi_i=\varphi_i(u_1,u_2,u_3)$, $f'=\frac{\partial f(u_1)}{\partial u_1}$, $f''=\frac{\partial^2f(u_1)}{\partial u_1\partial u_1}$, $i\in \{1,2,3,4\}$ and 
$f\in \{\alpha,\beta,\gamma\}$.\\

\noindent
From (\ref{eq2.8}), we obtain the matrix of the first fundamental form
\begin{eqnarray}\label{eq5.4}
[g_{ij}]=
\left[
   \begin{array}{ccc}
    a & b & c \\
   b & 0 & e \\
   c & e & 0
   \end{array}
\right].
\end{eqnarray}
And we obtain the inverse matrix $[g^{ij}]$ of $[g_{ij}]$ as
\begin{eqnarray}\label{eq5.5}
[g^{ij}]=\frac{1}{\det[g_{ij}]}
\left[
   \begin{array}{ccc}
    -e^2 & ce & be \\
  ce & -c^2 & bc-ae \\
  be & bc-ae & -b^2
   \end{array}
\right],
\end{eqnarray}
where
$a$, $b$, $c$ and $e$ are the same in (\ref{eq3.10}) and
\begin{eqnarray}\label{eq5.6)}
\det[g_{ij}]=2bce - ae^2 = D.
\end{eqnarray}
Furthermore, from (\ref{eq2.9}), the matrix form of the second fundamental from of the $2$-ruled hypersurface (\ref{eq5.3}) is the same given in 
(\ref{eq3.13}) and (\ref{eq3.14}). And we have the following theorem since 
the $\det [h_{ij}]\neq 0$.

\begin{theorem}
The $2$-ruled hypersurfaces of type-$3$ defined in (\ref{eq5.3}) is no flat.
\end{theorem}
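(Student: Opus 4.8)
The plan is to proceed exactly as in the type-$1$ and type-$2$ cases, namely to show that the Gaussian curvature $K_f$ defined in (\ref{eq2.11}) is not identically zero. Since the matrix of the second fundamental form of the type-$3$ hypersurface (\ref{eq5.3}) is the one already displayed in (\ref{eq3.13})--(\ref{eq3.14}), I would first write out the $3\times 3$ determinant
\[
\det[h_{ij}] = h_{11}h_{22}h_{33} + 2h_{12}h_{13}h_{23} - h_{12}^2 h_{33} - h_{13}^2 h_{22} - h_{23}^2 h_{11},
\]
and substitute the explicit expressions for the $h_{ij}$ from (\ref{eq3.14}). Pulling out the common denominator $\big(2fG_3G_4+\sum_{i=1}^2 G_iG_{i+2}\big)^{3/2}$, the numerator of $\det[h_{ij}]$ becomes a polynomial in $f$, $f_3$, $f_4$, the Gauss-map components $G_k$ and the first and second derivatives of $\alpha,\beta,\gamma$; the crux is to observe that this polynomial is not the zero polynomial.

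Next I would use the computation of the induced metric from (\ref{eq5.4})--(\ref{eq5.5}): the metric on the hypersurface is non-degenerate precisely when $\det[g_{ij}] = D = 2bce - ae^2 \neq 0$, which we must assume in order that $\varphi$ be a pseudo-Riemannian hypersurface in the sense of Section \ref{Preliminaire}. Substituting $\det[h_{ij}] \not\equiv 0$ and $\det[g_{ij}] = D \neq 0$ into (\ref{eq2.11}) then yields $K_f = \det[h_{ij}]/D \not\equiv 0$, so the hypersurface cannot be flat.

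The main obstacle is the bookkeeping needed to certify that the numerator of $\det[h_{ij}]$ does not vanish identically, and here the type-$3$ case genuinely differs from the previous two: because the director curves $\beta$ and $\gamma$ now lie on the light cone $\mathcal{LC}$, several of the inner products that were normalised to $\pm 1$ in the type-$1$ and type-$2$ settings now vanish, and one has to check that this extra degeneracy does not collapse $\det[h_{ij}]$ to zero. I would resolve this by isolating a single surviving monomial --- for instance the term $f_3^2 f_4\,\beta_3^2\gamma_3^2\,G_4^2 G_3$ (together with its $f_3 f_4^2$ counterpart) coming from $h_{11}h_{22}h_{33}$ --- and verifying that it is not cancelled by any other term for generic choices of $f$ and of the director curves, which establishes $\det[h_{ij}] \neq 0$ and hence $K_f \neq 0$.
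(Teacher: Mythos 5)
Your proposal takes essentially the same route as the paper: for type-$3$ the paper likewise keeps the second fundamental form (\ref{eq3.13})--(\ref{eq3.14}) unchanged, asserts that $\det[h_{ij}]\neq 0$, and concludes non-flatness from (\ref{eq2.11}) with $\det[g_{ij}]=D=2bce-ae^{2}\neq 0$. Your additional effort to certify $\det[h_{ij}]\not\equiv 0$ by isolating a surviving monomial (valid only for generic $f$, consistent with the corollary that constant $f$ gives a flat hypersurface) is simply more detail than the paper itself supplies, not a different method.
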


\begin{corollary}
The $2$-ruled hypersurfaces of type-$3$ is flat if $f$ is non zero constant.
\end{corollary}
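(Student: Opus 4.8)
The plan is to imitate, essentially verbatim, the proof already given for the type-$1$ corollary. The crucial remark is that here $f = f(u_3,u_4)$, so if $f$ is a nonzero constant then $f_3 = \partial f/\partial u_3 = 0$ and $f_4 = \partial f/\partial u_4 = 0$; equivalently, the only nonzero Christoffel symbols $\Gamma_{33}^2 = f_3$ and $\Gamma_{44}^1 = f_4$ both vanish, so $\varphi$ is just an affine $2$-ruled map into the flat ambient space as far as second-order data are concerned. First I would substitute $f_3 = f_4 = 0$ into the entries of the second fundamental form recorded in (\ref{eq3.14}), which the excerpt asserts are unchanged for type-$3$. Every summand of $h_{22}$, $h_{23}$ and $h_{33}$ carries a factor $f_3$ or $f_4$, hence these three coefficients vanish identically, while $h_{11}$, $h_{12}$, $h_{13}$ collapse to the reduced forms involving only $\sum_{i=1}^{2} G_{i+2}(\cdots)$ (the type-$3$ analogue of (\ref{eq3.16})).

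Next I would plug this into the determinant expansion already displayed in the paper,
\[
\det[h_{ij}] = h_{11}h_{22}h_{33} + 2h_{12}h_{13}h_{23} - h_{12}^2 h_{33} - h_{13}^2 h_{22} - h_{23}^2 h_{11}.
\]
Each of the six terms on the right contains at least one of the factors $h_{22}, h_{23}, h_{33}$, all of which are now zero, so $\det[h_{ij}] = 0$. Consequently, by the Gaussian curvature formula (\ref{eq2.11}),
\[
K_f = \frac{\det[h_{ij}]}{\det[g_{ij}]} = \frac{0}{D} = 0 ,
\]
so the $2$-ruled hypersurface of type-$3$ defined in (\ref{eq5.3}) is flat.

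The argument is entirely routine; there is no real obstacle. The only two points deserving a word of care are: (i) the division by $\det[g_{ij}] = D = 2bce - ae^2$ in (\ref{eq2.11}) presupposes $D \neq 0$, which is part of the standing assumption that $\varphi$ is a (non-degenerate) hypersurface, so flatness here should be read as ``$K_f = 0$ wherever $\varphi$ is a non-degenerate hypersurface''; and (ii) the bookkeeping check that the terms killed by $f_3 = f_4 = 0$ are precisely those making up $h_{22}, h_{23}, h_{33}$ together with the ``curvature'' parts of $h_{11}, h_{12}, h_{13}$, leaving the reduced matrix whose determinant manifestly vanishes.
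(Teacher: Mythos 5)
Your proof is correct and is essentially the argument the paper intends: the paper proves the type-$1$ analogue by noting $f$ constant gives $f_3=f_4=0$, hence $h_{22}=h_{23}=h_{33}=0$ and $\det[h_{ij}]=0$, and leaves the type-$3$ case to the same reasoning, which is exactly what you carry out (with the correct denominator $D=2bce-ae^2$ and the sensible caveat $D\neq 0$).
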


\noindent
For the mean curvature we have

\begin{theorem}
The $2$-ruled hypersurfaces of type-$3$ defined in (\ref{eq5.3}) is minimal 
in $M$, if
\begin{eqnarray}\label{eq5.7}
-e^2\left[f_3G_4(\alpha_3' + u_2\beta_3' + u_3\gamma_3')+f_4G_3(\alpha_4' 
+ u_2\gamma_4' + u_3\gamma_4') + \sum^{2}_{i=1}G_{i+2}(\alpha_i'' 
+ u_2\beta_i'' + u_3\gamma_i'')\right]\nonumber\\
+2ce\left[f_3\beta_3G_4(\alpha'_3 + u_2\beta'_3 + u_3\gamma'_3)
+ f_4\beta_4G_3(\alpha'_4 + u_2\beta'_4 + u_3\gamma'_4)
+ \sum_{i=1}^2 G_{i+2}\beta'_i + \right]\nonumber\\
+ 2be\left[f_3\gamma_3G_4(\alpha'_3 + u_2\beta'_3 + u_3\gamma'_3)
+ f_4\gamma_4G_3(\alpha'_4 + u_2\beta'_4 + u_3\gamma'_4)
+ \sum_{i=1}^2 G_{i+2}\gamma'_i\right]\nonumber\\
+ 2(bc-ae)\left[f_3\beta_3\gamma_3G_4 + f_4\beta_4\gamma_4G_3\right]
- c^2\left[f_3\beta^2_3G_4 + f_4\beta^2_4G_3\right]\nonumber\\
- b^2\left[h_3\gamma^2_3G_4 + h_4\gamma^2_4G_3\right]=0.
\end{eqnarray}
\end{theorem}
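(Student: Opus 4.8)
The plan is to mimic the computations already carried out for the type-$1$ and type-$2$ hypersurfaces. By (\ref{eq2.10}) the shape operator is $S_f=[g^{ij}]\cdot[h_{ij}]$, so I would multiply the inverse first fundamental form (\ref{eq5.5}) by the second fundamental form (\ref{eq3.13})--(\ref{eq3.14}). Only the diagonal of the product matters for the trace, so, using that $[h_{ij}]$ is symmetric, I would record
\begin{eqnarray*}
S_{11}&=&\tfrac{1}{D}\bigl(-e^2h_{11}+ce\,h_{12}+be\,h_{13}\bigr),\\
S_{22}&=&\tfrac{1}{D}\bigl(ce\,h_{12}-c^2h_{22}+(bc-ae)h_{23}\bigr),\\
S_{33}&=&\tfrac{1}{D}\bigl(be\,h_{13}+(bc-ae)h_{23}-b^2h_{33}\bigr),
\end{eqnarray*}
where $D=\det[g_{ij}]=2bce-ae^2$ and the $h_{ij}$ are the entries listed in (\ref{eq3.14}).

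Next, by (\ref{eq2.12}) minimality ($H_f=0$) is equivalent to $\operatorname{trace}(S_f)=S_{11}+S_{22}+S_{33}=0$. Since $D\neq0$ on the domain (otherwise the induced metric would be degenerate), I can multiply through by $D$ and reduce the minimality condition to
\begin{eqnarray*}
-e^2h_{11}+2ce\,h_{12}+2be\,h_{13}-c^2h_{22}-b^2h_{33}+2(bc-ae)h_{23}=0,
\end{eqnarray*}
the factors $2$ on $h_{12}$ and $h_{13}$ arising because those entries occur in two of the $S_{ii}$. I would then substitute the explicit formulas (\ref{eq3.14}); all six of $h_{11},h_{12},h_{13},h_{22},h_{23},h_{33}$ carry the common positive denominator $\sqrt{2fG_3G_4+\sum_{i=1}^2G_iG_{i+2}}$, so clearing it leaves precisely the polynomial identity (\ref{eq5.7}) in the data $f_3,f_4,G_j,\alpha',\beta',\gamma',\alpha'',\beta'',\gamma''$.

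The only genuine obstacle is bookkeeping: pairing each coefficient $-e^2,\,2ce,\,2be,\,-c^2,\,-b^2,\,2(bc-ae)$ coming from the row sums of $[g^{ij}]\cdot[h_{ij}]$ with the correct numerator from (\ref{eq3.14}) and keeping the signs straight. Here the diagonal of $[g_{ij}]$ in the $(u_2,u_3)$-block is $0$ (cf.\ (\ref{eq5.4})), which is exactly why $[g^{ij}]$ in (\ref{eq5.5}) is purely off-diagonal and why the ``$1$'', ``$-b$'', ``$-c$'' and ``$a$'' terms present in the type-$1$ and type-$2$ conditions disappear in (\ref{eq5.7}). Once the term-by-term substitution is performed, the stated equation drops out, which completes the proof.
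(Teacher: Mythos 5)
Your proposal follows exactly the paper's own argument: form the shape operator $S_f=[g^{ij}]\cdot[h_{ij}]$ from (\ref{eq5.5}) and (\ref{eq3.14}), sum the diagonal entries $S_{11}+S_{22}+S_{33}$, and set the trace to zero via (\ref{eq2.12}), which after clearing the common factors yields (\ref{eq5.7}). The only (harmless) difference is that you carry the factor $1/D$ explicitly and justify discarding it, whereas the paper suppresses it; the computation and conclusion are the same.
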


\begin{proof}
By (\ref{eq2.10})  the matrix of the shape operator is
\begin{eqnarray*}
S=
\left[
   \begin{array}{ccc}
    -e^2 & ce & be \\
  ce & -c^2 &bc-ae \\
  be & bc-ae & -b^2
   \end{array}
\right]\left[
   \begin{array}{ccc}
    h_{11} & h_{12} & h_{13} \\
   h_{12} & h_{22} & h_{23} \\
 h_{13} & h_{23} & h_{33}
   \end{array}
\right],
\end{eqnarray*}
where $h_{11}$, $h_{12}$, $h_{13}$, $h_{22}$, $h_{23}$, $h_{33}$ are the 
same in (\ref{eq3.14}). Then we get the coefficients of $S$ by
\begin{eqnarray*}
S_{11}&=&-e^2h_{11}+ceh_{12}+beh_{13},\\
S_{22}&=&ceh_{12}-c^2h_{22}+(bc-ae)h_{23},\\
S_{33}&=&be+h_{13}+(bc-ae)h_{23}-b^2h_{33}.
\end{eqnarray*}
And using (\ref{eq3.14}) and (\ref{eq2.12}), we see that the $2$-ruled hypersurfaces of type-$3$ is minimal if
\begin{eqnarray*}
S_{11}+S_{22}+S_{33}=0,
\end{eqnarray*}
then that end the proof.
\end{proof}

\begin{corollary}
If the curves $\beta$ and $\gamma$ are orthogonal then the $3$-ruled hypersurfaces of type-$3$ defined in (\ref{eq5.3}) is minimal if
\begin{eqnarray}\label{eq5.8}
+2bc\left[f_3\beta_3\gamma_3G_4+f_4\beta_4\gamma_4G_3\right]
-c^2\left[f_3\beta^2_3G_4+h_4\gamma^2_4G_3\right]-b^2\left[f_3\gamma^2_3G_4+f_4\gamma^2_4G_3\right]=0.
\end{eqnarray}
\end{corollary}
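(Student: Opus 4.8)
The plan is to follow exactly the same computational scheme used for the
type-$1$ and type-$2$ corollaries, now specialized to the degenerate first
fundamental form (\ref{eq5.4}) of the type-$3$ hypersurface. First I would
start from the shape-operator decomposition established in the preceding
theorem, namely the three diagonal entries $S_{11}$, $S_{22}$, $S_{33}$
expressed through the inverse metric coefficients of (\ref{eq5.5}) and the
second fundamental form coefficients $h_{ij}$ of (\ref{eq3.14}). The
minimality condition (\ref{eq2.12}) reads $S_{11}+S_{22}+S_{33}=0$, which is
precisely equation (\ref{eq5.7}) of the previous theorem, so the corollary is
obtained simply by imposing the extra hypothesis that $\beta$ and $\gamma$ are
orthogonal.

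Next I would translate the orthogonality hypothesis into the vanishing of the
relevant off-diagonal metric coefficient. Since $\langle\beta,\gamma\rangle=0$
forces $e=f(\beta_3\gamma_4+\beta_4\gamma_3)+\sum_{i=1}^{2}(\beta_i\gamma_{i+2}+\beta_{i+2}\gamma_i)=0$ by the formula for $e$ in (\ref{eq3.10}). Substituting
$e=0$ into the inverse matrix (\ref{eq5.5}) kills the entries containing $e$:
the $(1,1)$ entry $-e^2$ vanishes, the $(1,2)$ and $(1,3)$ entries $ce$ and
$be$ vanish, and $bc-ae$ collapses to $bc$; the determinant $D=2bce-ae^2$ also
vanishes, but that is harmless here because only the cofactor combinations
enter the trace. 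Plugging these simplified coefficients into $S_{11}$,
$S_{22}$, $S_{33}$ and using (\ref{eq3.14}), every term in the first two
brackets of (\ref{eq5.7}) that was multiplied by $-e^2$, $2ce$, or $2be$ drops
out, leaving only the three surviving contributions $2bc[f_3\beta_3\gamma_3G_4+f_4\beta_4\gamma_4G_3]-c^2[f_3\beta_3^2G_4+f_4\gamma_4^2G_3]-b^2[f_3\gamma_3^2G_4+f_4\gamma_4^2G_3]$, which is exactly (\ref{eq5.8}). Setting this equal to
zero gives the asserted minimality criterion.

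The main obstacle, such as it is, is bookkeeping rather than mathematics: one
must make sure that after setting $e=0$ the three terms $S_{11}$, $S_{22}$,
$S_{33}$ really do combine into the stated short expression, i.e. that the
coefficient of each $h_{ij}$ is correctly read off from the degenerate inverse
matrix and that nothing is lost by the vanishing determinant. I would verify
this by expanding $S_{11}+S_{22}+S_{33}$ symbolically with $e=0$: the only
nonzero cofactors are the $(2,2)$ entry $-c^2$, the $(3,3)$ entry $-b^2$, and
the $(2,3)=(3,2)$ entry $bc$, so $S_{11}+S_{22}+S_{33}=-c^2h_{22}-b^2h_{33}+2bch_{23}$, and substituting the explicit $h_{22}$, $h_{33}$, $h_{23}$ from
(\ref{eq3.14}) (whose numerators are $f_3\beta_3^2G_4+f_4\beta_4^2G_3$,
$f_3\gamma_3^2G_4+f_4\gamma_4^2G_3$, $f_3\beta_3\gamma_3G_4+f_4\beta_4\gamma_4G_3$ respectively) yields (\ref{eq5.8}) up to the common positive denominator,
which may be cleared. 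This completes the proof.
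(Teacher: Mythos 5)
Your proposal is correct and is essentially the paper's own (implicit) argument: orthogonality of $\beta$ and $\gamma$ gives $e=g_f(\beta,\gamma)=0$, and substituting $e=0$ into the trace condition $S_{11}+S_{22}+S_{33}=0$ of the preceding theorem kills every term carrying $-e^2$, $ce$ or $be$ and turns $bc-ae$ into $bc$, leaving $-c^2h_{22}-b^2h_{33}+2bch_{23}=0$, which after clearing the common denominator of (\ref{eq3.14}) is exactly (\ref{eq5.8}). Two minor remarks: the second bracket should read $f_4\beta_4^2G_3$ (the printed $h_4\gamma_4^2G_3$ in (\ref{eq5.8}) is a typo, and your mid-computation $f_4\gamma_4^2G_3$ repeats it, though your final verification via the numerators of $h_{22},h_{33},h_{23}$ is the correct one), and with $e=0$ the determinant $D=2bce-ae^2$ vanishes so the induced metric is degenerate --- a defect inherited from the paper's formal treatment rather than introduced by your argument.
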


\noindent
To end this section, we will give the operator of Laplace-Beltrami in the following theorem:

\begin{theorem}
The components of the Laplace-Beltrami operator of the $2$-ruled hypersurface of type-$3$ are
\begin{eqnarray}\label{eq5.9}
\Delta\varphi_i=\frac{1}{\sqrt{L} }\left[
\begin{aligned}
&\frac{(\alpha''_i+u_2\beta''_i+u_3\gamma''_i)+(b\beta_i)_{u_1}+(c\gamma_i)_{u_1})L-J_1(\alpha'_i+u_2\beta'_i+u_3\gamma'_i+b\beta_i+c\gamma_i )}{L^{\frac{3}{2}}}\\
& +\frac{(b\beta'_i+((-a-c^2)\beta_i)_{u_2}+(bc\gamma_i)_{u_2})L-J_2(b(\alpha'_i+u_2\beta'_i+u_3\gamma'_i)+(-a-c^2)\beta_i+bc\gamma_i)}{L^\frac{3}{2}} \\
&+ \frac{(c\gamma'_i+(bc\beta_i)_{u_3}+((-a-b^2)\gamma_i)_{u_3})L-J_3(c(\alpha'_i+y\beta'_i+z\gamma'_i)+bc\beta_i+(-a-b^2)\gamma_i)}{L^\frac{3}{2}}
\end{aligned}
\right],
\end{eqnarray}
where $i = 1, 2, 3, 4$; ; $L=2bce-ae^22$, $J_1=2b_{u_1}ce+2bc_{u_1}e+2bce_{u_1}-2aa_{u_1}$, $J_2=2b_{u_2}ce+2bc_{u_2}e+2bce_{u_2}-2aa_{u_2}$, $J_3=2b_{u_3}ce+2bc_{u_3}e+2bce_{u_3}-2aa_{u_3}$.
\end{theorem}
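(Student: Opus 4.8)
The plan is to specialise the general Laplace--Beltrami formula \eqref{eq3.19} to the type-$3$ hypersurface \eqref{eq5.3}, following the same route already used for type-$1$ in \eqref{eq3.23} and for type-$2$ in \eqref{eq4.9}. First I would record the first partial derivatives of the component functions: differentiating \eqref{eq5.3} gives $\varphi_{iu_1}=\alpha'_i+u_2\beta'_i+u_3\gamma'_i$, $\varphi_{iu_2}=\beta_i$ and $\varphi_{iu_3}=\gamma_i$ for each $i\in\{1,2,3,4\}$, and these are the only derivatives of $\varphi$ that enter \eqref{eq3.19}.

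Next I would substitute the inverse first fundamental form \eqref{eq5.5} together with $\det[g_{ij}]=D=2bce-ae^2$ into \eqref{eq3.19}. Using $\sqrt{D}\,g^{kj}=\mathrm{cof}_{kj}/\sqrt{D}$, where $\mathrm{cof}_{kj}$ are the entries of the cofactor matrix in \eqref{eq5.5}, this rewrites $\Delta\varphi_i$ as $\tfrac{1}{\sqrt{D}}\bigl[\partial_{u_1}(N_1/\sqrt{D})+\partial_{u_2}(N_2/\sqrt{D})+\partial_{u_3}(N_3/\sqrt{D})\bigr]$ with $N_1=-e^2\varphi_{iu_1}+ce\,\varphi_{iu_2}+be\,\varphi_{iu_3}$, $N_2=ce\,\varphi_{iu_1}-c^2\varphi_{iu_2}+(bc-ae)\varphi_{iu_3}$ and $N_3=be\,\varphi_{iu_1}+(bc-ae)\varphi_{iu_2}-b^2\varphi_{iu_3}$; the two vanishing diagonal entries of \eqref{eq5.4}, a consequence of $\beta$ and $\gamma$ lying on the light cone $\mathcal{LC}$, are exactly what make \eqref{eq5.5} and hence these $N_k$ take this reduced shape.

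Then I would carry out the three differentiations by the quotient rule, $\partial_{u_k}\bigl(N_k/\sqrt{D}\bigr)=\bigl(N_{k,u_k}D-\tfrac12 N_k\,D_{u_k}\bigr)/D^{3/2}$, and substitute back $\varphi_{iu_1}=\alpha'_i+u_2\beta'_i+u_3\gamma'_i$, $\varphi_{iu_2}=\beta_i$, $\varphi_{iu_3}=\gamma_i$. The factor $D_{u_k}=\partial_{u_k}(2bce-ae^2)$ produced at this step is what gives rise to the quantities $J_1,J_2,J_3$ in \eqref{eq5.9}, while $D$ itself (written $L$ there) supplies the powers $D^{3/2}$ in the denominators; collecting the three contributions then yields the stated formula. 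I expect the only genuine difficulty to be the bookkeeping: tracking the nine products $g^{kj}\varphi_{iu_j}$, the $u_k$-derivatives of the coefficients $a,b,c,e$ hidden inside $D$ (themselves given through \eqref{eq3.10}), and the signs inherited from the light-cone normalisation. No new geometric input is required beyond what already underpins \eqref{eq3.23} and \eqref{eq4.9}: it is the identical computation carried out with the type-$3$ inverse metric \eqref{eq5.5}.
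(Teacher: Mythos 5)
Your plan is the same route the paper (implicitly) uses: the paper gives no proof of this theorem, but the intended derivation is exactly the specialisation of \eqref{eq3.19} that it carries out explicitly for type-$1$ in \eqref{eq3.20}--\eqref{eq3.23}, namely insert the inverse first fundamental form and the determinant, then apply the quotient rule. Your identification of the ingredients for type-$3$ is correct: the cofactor numerators are $N_1=-e^2\varphi_{iu_1}+ce\,\beta_i+be\,\gamma_i$, $N_2=ce\,\varphi_{iu_1}-c^2\beta_i+(bc-ae)\gamma_i$, $N_3=be\,\varphi_{iu_1}+(bc-ae)\beta_i-b^2\gamma_i$, with $\varphi_{iu_1}=\alpha'_i+u_2\beta'_i+u_3\gamma'_i$ and $D=2bce-ae^2$, and the quotient rule gives $\partial_{u_k}(N_k/\sqrt{D})=\bigl(N_{k,u_k}D-\tfrac12 N_k D_{u_k}\bigr)/D^{3/2}$.

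The gap is in your last step: the assertion that ``collecting the three contributions then yields the stated formula'' is not true of the formula as printed in \eqref{eq5.9}. The inner expressions in \eqref{eq5.9} are $\varphi_{iu_1}+b\beta_i+c\gamma_i$, $b\varphi_{iu_1}+(-a-c^2)\beta_i+bc\gamma_i$, $c\varphi_{iu_1}+bc\beta_i+(-a-b^2)\gamma_i$, i.e.\ the cofactors of the type-$2$ metric \eqref{eq4.5} in the orthogonal case $e=0$ (they are lifted verbatim from \eqref{eq4.9}), not the type-$3$ cofactors $-e^2,\,ce,\,be,\,-c^2,\,bc-ae,\,-b^2$ of \eqref{eq5.5} that your own $N_k$ correctly contain; note also that $e=0$ cannot be invoked here, since it would force $D=2bce-ae^2=0$. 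Likewise the stated $J_1=2b_{u_1}ce+2bc_{u_1}e+2bce_{u_1}-2aa_{u_1}$ is not $\partial_{u_1}(2bce-ae^2)=2b_{u_1}ce+2bc_{u_1}e+2bce_{u_1}-a_{u_1}e^2-2aee_{u_1}$ (nor half of it, which is what the quotient rule actually produces). So carrying out your plan honestly yields $\Delta\varphi_i=\frac{1}{\sqrt{D}}\sum_{k=1}^{3}\bigl(N_{k,u_k}D-\tfrac12 N_k\,\partial_{u_k}D\bigr)/D^{3/2}$ with the $N_k$ above, which is a different expression from \eqref{eq5.9}; to close the argument you must either derive and state this corrected formula (flagging the theorem's coefficients and $J_k$ as erroneous carry-overs from the type-$2$ case), or explain why the printed coefficients would be recovered --- which the computation you describe does not do.
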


\noindent
Note that the hypersurfaces constructed in this paper are not flat. Unlike Euclidean and Monkowskian spaces, where the ruled hypersurfaces are flat

\section{Conclusion}
\noindent
We end this work by giving some applications of ruled surfaces and $2$-ruled
hypersurfaces as eneralisations of the first one. Ruled surfaces have been applied in different areas such as CAD, electric discharge machining \cite{Altin2021, Ravani1991-2}. The authors \cite{Ravani1991-1} present an elementary introduction to the theory of Betrand pairs of curves and ruled surfaces. Bertrand pairs of ruled surfaces are introduced as offsets in the context of line geometry. Also, the ruled surfaces has an important application area on kinematics \cite{Altin2021,Pottmann2001}. Additionally, ruled surfaces have an important application area in arcitecture \cite{Altin2021}. For lightweight structures in the field of architecture and civil engineering, concrete shells with negative Gaussian curvature are frequently used. One class of such surfaces are the skew ruled surfaces \cite{Noak2019}.

\section*{Acknowledgments}
The authors would like to thank the referee for his/her valuable suggestions and comments that helped them improve the paper.


\begin{thebibliography}{xxx}
\bibitem{Altin2021} M. Altin, A. Kazan and D. W. Yoon, \emph{$2$-Ruled hypersurfaces in Euclidean $4$-space}, 	
J. Geom. Phys. {\bf 166} (2021). 104236, 13 pp.

\bibitem{Aslan21-1} S. Aslan, M. Bekar, Y. Yaylı, \emph{Ruled surfaces constructed by quaternions}, J.  Geom.  Phys. {\bf 161} (2021) 104048, 9pp.

\bibitem{Aslan21-2} S. Aslan, M. Bekar, Y. Yaylı, \emph{Ruled surfaces in 
Minkowski $3$-spaces and split quaternion operators}, 
Adv. Appl. Clifford Algebras. {\bf 31}  (2021), (5), 74, 14pp.

\bibitem{Barbosa84-1} J. L. M. Barbosa, J. A. Delgado, \emph{Ruled submanifolds of space forms with mean curvature of nonzero constant 
length}, 
Amer. J. Math. {\bf 106} (1984), 763-780.

\bibitem{Barbosa84-2} J. L. M. Barbosa, L. P. M. Jorge, M. Dajczer, 
\emph{Minimal ruled submanifolds of spaces of constant curvature}, 
Indiana Univ. Math. J. {\bf 33} (1984), 531-547.

\bibitem{Boeckx96}
E. Boeckx, O. Kowalski and L. Vanhecke, Riemannian manifolds of conullity 
two, World Scientific Publ., Singapore, 1996 (Monograph).

\bibitem{Carmo76} M. P. do Carmo, Differential Geometry of Curves and Surfaces. Prentice-Hall, Inc., Englewood Cliffs, N.J., 1976. viii+503 pp. 190-191.

\bibitem{Chaichi05} M. Chaichi M, E. Garc\'ia-R\'io, Y. Matsushita,  \emph{Curvature properties of four-dimensional Walker metrics}, Classical Quantum Gravity {\bf 22} (2005), 559-577.

\bibitem{Dillen} F. Dillen, W. K$\ddot{u}$hnel, \emph{Ruled Weingarten surfaces in Minkowski $3$-space}, Manuscripta Math. {\bf 98} (1999) 307-320.

\bibitem{Divjak} B. Divjak, Z. Milin-Sipus, \emph{Special curves on ruled surfaces in Galilean and presudo-Galilean spaces}, 
Acta Math. Hungar. {\bf 98}  (2003), (3), 203-215.

\bibitem{Falcitelli2004}
M. Falcitelli, S. Ianus and A. M.  Pastore.  Riemannian submersions 
and related topics. World Scientific Publishing Co., Inc., River Edge, NJ, 2004. xiv+277 pp. ISBN: 981-238-896-6.

\bibitem{Flory} S. Fl\"{o}ry, H. Pottmann, \emph{Ruled surfaces for rationalization and design in architecture}, in: ACADIA 10: LIFE Information, 
on Responsive Information and Variations in Architecture, 2010, pp. 103-109.

\bibitem{Guler} E. G\"{u}ler, H. H. Hacısaliho\u{g}lu, Y.H. Kim, \emph{The Gauss map and the third Laplace-Beltrami operator of the rotational hypersurface in 4-space}, Symmetry 10  (2018), (9), 1-11.

\bibitem{Kim2004} Y. H. Kim, D.W. Yoon, \emph{Classification of ruled surfaces 
in Minkowski $3$-spaces}, J. Geom. Phys. {\bf 49} (2004), 89-100.

\bibitem{Kimura2021}
M. Kimura,  H. Lee,  J. de D. P\'erez and Y. J. Suh, 
\emph{Ruled real hypersurfaces in the complex quadric},
J. Geom. Anal. {\bf 31} (2021), (8), 7989-8012. 

\bibitem{Kimura2020}
M. Kimura, S. Maeda and H. Tanabe,  
\emph{New construction of ruled real hypersurfaces in a complex hyperbolic space and its applications}, 
Geom. Dedicata {\bf 207} (2020), 227-242.   

\bibitem{Ndiaye2022} A. Ndiaye, Z. $\ddot{O}$zdemir, \emph{$2$-ruled hypersurfaces in Minkowski $4$-space and their constructions via octonions}, (2022),  https://doi.org/10.48550/arXiv.2205.10808

\bibitem{Niang03} A. Niang, \emph{Surfaces minimales r\'egl\'ees dans l'espace de Minkowski ou Euclidien orient\'e de dimension $3$}, 
Afrika Mat. {\bf 15} (2003), (3), 117-127.

\bibitem{Niang21}
A.  Niang,  A. Ndiaye and A. S. Diallo, \emph{A classification of strict Walker 
$3$-manifold}, Konuralp J. Math. {\bf 9} (2021), (1), 148-153. 

\bibitem{Noak2019}
K. Noack and D. Lordick, \emph{Optimized ruled surfaces with an application to thin-walled concrete shells}, Cocchiarella, Luigi (ed.), ICGG 2018 - Proceedings of the 18th international conference on geometry and graphics. 40th anniversary - Milan, Italy, August 3-7, 2018. In 2 volumes. Cham: Springer; Milan: Politecnico de Milano (ISBN 978-3-319-95587-2/pbk; 978-3-319-95588-9/ebook). Advances in Intelligent Systems and Computing 809, 338-349 (2019). 

\bibitem{Nomizu94}
K. Nomizu and T. Sasaki, Affine Differential Geometry. Geometry of Affine Immersions. Cambridge University Press. Vol. 111. 1994.

\bibitem{Pottmann2001}
H. Pottmann and J. Wallner, Computational line geometry. 
Mathematics and Visualization. Berlin: Springer. (2001). 

\bibitem{Ravani1991-1}
B. Ravani and T. S. Ku, \emph{Bertrand offsets of ruled and developable surfaces}, Comput.-Aided Des. {\bf 23} (1991), (2), 145-152 . 

\bibitem{Ravani1991-2}
B. Ravani and J. W. Wan, \emph{Computer aided eometric desin of line
constructs}, Trans. ASME, J. Mec. Des. {\bf 113} (1991), (4), 363-371.

\bibitem{Saji2002} K. Saji, \emph{Singularities of non-degenerate $2$-ruled hypersurfaces in $4$-space}, Hiroshima Math. J. {\bf 32} (2002), 309-323.

\end{thebibliography}
\end{document}